\providecommand{\U}[1]{\protect\rule{.1in}{.1in}}
\newtheorem{theorem}{Theorem}[section]
\newtheorem{proposition}[theorem]{Proposition}
\newtheorem{corollary}[theorem]{Corollary}
\newtheorem{example}[theorem]{Example}
\newtheorem{examples}[theorem]{Examples}
\newtheorem{remark}[theorem]{Remark}
\newtheorem{lemma}[theorem]{Lemma}
\newtheorem{final remark}[theorem]{Final Remark}
\newtheorem{definition}[theorem]{Definition}
\begin{document}

\title{Representation of sequence classes by operator ideals}
\author{Geraldo Botelho\thanks{Supported by FAPEMIG grants RED-00133-21 and APQ-01853-23. } ~and  Ariel S. Santiago\thanks{Supported by a FAPEMIG scholarship\newline 2020 Mathematics Subject Classification: 46A45, 46B45, 47B10,  47L20.\newline Keywords: Banach spaces, sequence classes, operator ideals, summing operators. }}
\date{}
\maketitle

\begin{abstract}  It is well known that weakly $p$-summable sequences in a Banach space $E$ are associated to bounded operators from $\ell_{p^*}$ to  $E$, and unconditionally $p$-summable sequences in $E$ are associated to compact operators from $\ell_{p^*}$ to  $E$. Generalizing these results to a quite wide environment, we characterize the classes of Banach spaces-valued sequences that are associated to (or represented by) some Banach operator ideal. Using these characterizations, we decide, among all sequence classes that usually appear in the literature, which are represented by some Banach operator ideal and which are not. Moreover, to each class that is represented by some Banach operator ideal, we show an ideal that represents it. Illustrative examples and additional applications are provided.
\end{abstract}

\section{Introduction}

\noindent{\bf 1.1 Setting the scene.} Let ${\cal L}(E,F)$ denote the space of bounded linear operators from the Banach space $E$ to the Banach space $F$, let ${\cal K}(E,F)$ be its closed subspace formed by all compact operators and let ${\cal N}(E,F)$ be the space of nuclear operators endowed with the nuclear norm. For $1 \leq p < \infty$, by $\ell_p^w(E)$ we denote the space of weakly $p$-summable $E$-valued sequences endowed with its natural norm (see \cite[p.\,32]{diestel+jarchow+tonge} or Example \ref{ey8n}), and by $\ell_p^u(E)$ its closed subspace formed by all unconditionally $p$-summable sequences, that is, sequences $(x_j)_{j=1}^\infty$ in $\ell_p^w(E)$ such that $\lim\limits_n\|(x_j)_{j=n}^\infty\|_{w,p} = 0$. Also, by $\ell_1(E)$ we denote the space of absolutely summable $E$-valued sequences. As usual, $(e_j)_{j=1}^\infty$ denotes the sequence of canonical unit vectors of scalar-valued sequence spaces, and $p^*$ denotes the conjugate of $p \in (1, +\infty)$. It is well known that the following correspondences
$$u \in {\cal L}(c_0, E) \mapsto (u(e_j))_{j=1}^\infty \in \ell_1^w(E), $$
$$u \in {\cal K}(c_0, E) \mapsto (u(e_j))_{j=1}^\infty \in \ell_1^u(E),$$
$$u \in {\cal L}(\ell_{p^*}, E) \mapsto (u(e_j))_{j=1}^\infty \in \ell_p^w(E), ~~1 < p < \infty,$$
$$u \in {\cal K}(\ell_{p^*}, E) \mapsto (u(e_j))_{j=1}^\infty \in \ell_p^u(E), ~~1 < p < \infty,$$
$$u \in {\cal N}(c_0, E) \mapsto (u(e_j))_{j=1}^\infty \in \ell_1(E), $$
are isometric isomorphisms. This facts can be found, e.g, in \cite{df, diestel+jarchow+tonge}; for the last one see \cite[Example 2.30]{ryan}.  Needless to say, this possibility of regarding a vector-valued sequence as an operator is very fruitful in Banach space theory. These are not the only known cases: for $1 < p < \infty$, denoting the space of Cohen strongly $p$-summing $E$-valued sequences by $\ell_p\langle E \rangle$ (see Example \ref{ey8n}), in \cite[Corollary 3.7]{fourie+rontgen} it is proved that the same correspondence $u \in {\cal N}(\ell_{p^*},F) \mapsto (u(e_j))_{j=1}^\infty \in \ell_p\langle E \rangle$ is an isometric isomorphism.

Of course, it is not a coincidence that the correspondence is always $u \mapsto (u(e_j))_{j=1}^\infty$, that the underlying space of operators is always a component of a Banach operator ideal, namely, ${\cal L}, {\cal K}$ and ${\cal N}$, and that the domain space is a predual of the scalar component of the class.

There are several other classes of sequences that have been playing a central role in Banach space theory, such as absolutely $p$-summable sequences in the linear and nonlinear theory of absolutely summing operators, and almost unconditionally summable sequences in the geometry of Banach spaces (type, cotype) and in probability in Banach spaces.

An attempt to unify the study of all such spaces of sequences was made in \cite{botelho+campos} with the concept of {\it sequence classes}, which are rules $X$ that assign to each Banach space $E$ a Banach space $X(E)$ of $E$-valued sequences with certain properties (cf. Section 2). This approach has proved to be quite fruitful, see, e.g., \cite{achour, achour2, botelho+campos+2, davidson, ariellama, jamilsonrenato, jamiljoed, baweja, baianosmedit, baianosmfat}. The usefulness of the examples described above for the classes $\ell_p^w, \ell_p^u$,  $\ell_p\langle\cdot\rangle$ and $\ell_1(\cdot)$ make the following question quite natural: Given a sequence class $X$, are there a scalar-valued sequence space $\lambda$ and a Banach operator ideal $\cal I$ such that the correspondence
$$ u \in  \mathcal{I}(\lambda, E) \mapsto (u(e_j))_{j=1}^\infty \in X(E)$$
is an isometric isomorphism for every Banach space $E$? In this case we say that the sequence class $X$ is represented by $\cal I$; and a sequence class is ideal-representable if it is represented by some Banach operator ideal. In this paper we develop a theory of ideal-representable sequence classes such that: (i)  The aforementioned representations emerge as particular instances. (ii) For all sequence classes that usually appear in the literature, we decide which are ideal-representable and which are not. (iii) Useful characterizations of ideal-representable sequence classes are proved. (iv) For any ideal-representable class, we exhibit a Banach operator ideal that represents it. (v) Applications outside the environment of ideal-representability are provided.

Among the many results we prove and the several examples we give, some noteworthy facts appear; for example, the sequence class $E \mapsto \ell_p(E)$ of absolutely $p$-summable sequences is ideal-representable if and only if $p = 1$.

\medskip

\noindent{\bf 1.2 Overview of the paper.} In Section 2 we fix the notation and describe the basics of sequence classes, including the most popular examples. We begin Section 3 by studying the scalar-valued sequence spaces that shall fit our purposes. Then we define ideal-representable sequence classes, give the first examples and prove some results that will give, for instance, the first examples of non ideal-representable classes. Next we prove some characterizations of ideal-representable sequence classes which will be used several times along the paper. We finish Section 3 using the characterizations to complete the picture of the usual sequence classes that are ideal-representable or not. In Section 4 we give two applications of ideal-representability that are not related to ideal-representability at first glance. Namely, we improve an important result from \cite{junek+matos} and a very recent result from \cite{ariellama}.

\section{Preliminaries}

For Banach spaces $E$ and $F$ over $\mathbb{K} = \mathbb{R}$ or $\mathbb{C}$, $E^*$ denotes the topological dual of $E$, $B_E$ denotes the closed unit ball of $E$ and ${\cal L}(E;F)$ denotes the Banach space of bounded linear operators from $E$ to $F$ with the usual operator norm.
The symbol $E \stackrel{1}{\hookrightarrow} F$ means that $E$ is a linear subspace of $F$ and $\|\cdot\|_F \leq \|\cdot\|_E$ on $E$. We write $E \stackrel{1}{=}F $ if $E = F$ isometrically. The identity operator on $E$ is denoted by ${\rm id}_E$.  By $c_{00}(E)$ and $\ell_\infty(E)$ we denote the spaces of eventually null and bounded $E$-valued sequences. For $j \in \mathbb{N}$, set $e_j := (0, \ldots, 0,1, 0,0, \ldots)$, where $1$ appears at the $j$-th coordinate.

A {\it sequence class} is a rule that assigns, to each Banach space $E$, a Banach space $X(E)$ of $E$-valued sequences such that $c_{00}(E) \subseteq X(E) \stackrel{1}{\hookrightarrow} \ell_\infty(E)$ and $\|e_j\|_{X(\mathbb{K})} = 1$ for every $j \in \mathbb{N}$.

\begin{example}\label{ey8n}\rm Let $1 \leq p < \infty$. The following correspondences are sequence classes:\\
$\bullet$ $E \mapsto \ell_\infty(E)$ = bounded  sequences, $E \mapsto c_0(E)$ = norm null  sequences, $E \mapsto c(E)$ = convergent  sequences, $E \mapsto c_0^w(E)$ = weakly null sequences, all of them endowed with the supremum norm.\\
$\bullet$ $E \mapsto \ell_p(E)$ = absolutely $p$-summable sequences, with the norm $\|(x_j)_{j=1}^\infty\|_p = \left(\sum\limits_{j=1}^\infty\|x_j\|^p\right)^{1/p}$.\\
$\bullet$  $E \mapsto \ell_p^w(E)$ = weakly $p$-summable sequences, with the norm $\|(x_j)_{j=1}^\infty\|_{w,p} = \sup\limits_{\varphi \in B_{E^*}} \!\! \|(\varphi(x_j))_{j=1}^\infty\|_p$.\\ 
$\bullet$ $E \mapsto \ell_p^u(E)$ = unconditionally $p$-summable sequences (see \cite[8.2]{df}), with the norm $\|\cdot\|_{w,p}$.\\
$\bullet$ $E \mapsto {\rm Rad}(E)$ = almost unconditionally summable sequences (see \cite[Chapter 12]{diestel+jarchow+tonge}), with the norm $\|(x_j)_{j=1}^\infty\|_{\rm Rad}= \left\|\sum\limits_{j=1}^\infty r_jx_j \right\|_{L_2([0,1];E)}$, where $(r_j)_{j=1}^\infty$ is the sequence of Rademacher functions. \\
$\bullet$ $E \mapsto {\rm RAD}(E)$ = almost unconditionally bounded sequences (see \cite{vakhania}), with the norm\\ $\|(x_j)_{j=1}^\infty\|_{\rm RAD}=$ $\sup\limits_n \|(x_j)_{j=1}^n\|_{\rm Rad}$. \\
$\bullet$ $E \mapsto \ell_p\langle E \rangle$ = Cohen strongly $p$-summable sequences (see \cite{cohen}), with the norm
$$ \|(x_j)_{j=1}^\infty\|_{C,p} :=\sup\limits_{(\varphi_j)_{j=1}^\infty \in B_{\ell_{p^*}^w(E^*)}}  \left| \sum_{j=1}^\infty \varphi_j(x_j) \right|=\sup\limits_{(\varphi_j)_{j=1}^\infty \in B_{\ell_{p^*}^w(E^*)}}   \sum_{j=1}^\infty \left|\varphi_j(x_j) \right|.$$
$\bullet$ $E \mapsto \ell_p^{\rm mid}(E)$ = mid $p$-summable sequences (see \cite{botelho+campos+santos}), with the norm
$$ \|(x_j)_{j=1}^\infty\|_{{\rm mid},p}:= \sup_{(\varphi_n)_{n=1}^\infty \in B_{\ell_p^w(E^*)}} \left( \sum_{n=1}^{\infty} \sum_{j=1}^{\infty} |\varphi_n(x_j)|^p\right)^{1/p}.$$ 
\end{example}

When referring to a sequence class $X$, we shall adopt the following {\it modus vivendi}: at any risk of ambiguity we shall write $X(\cdot)$, otherwise we simply write $X$. For example, we write $\ell_p^w$ instead of $\ell_p^w(\cdot)$, but we write $\ell_p(\cdot)$ and not $\ell_p$.

By $\cal I$ we denote a Banach operator ideal endowed with a norm $\|\cdot\|_{\cal I}$. For the theory of Banach operator ideals we refer to \cite{df, pietsch}.

Let $X,Y$ be sequence classes. According to \cite{botelho+campos}, a linear operator $u \in {\cal L}(E;F)$ is said to be $(X;Y)$-summing, in symbols $u \in \Pi_{X;Y}(E;F)$, if $(u(x_j))_{j=1}^\infty \in Y(F)$ whenever $(x_j)_{j= 1}^\infty \in X(E)$. In this case, the induced linear operator
$$\widehat{u} \colon X(E) \longrightarrow Y(F)~,~\widehat{u}((x_j)_{j= 1}^\infty) =  (u(x_j))_{j=1}^\infty,$$
is bounded and the expression $\|u\|_{\Pi_{X;Y}} := \|\widehat{u}\|$ makes $\Pi_{X;Y}(E;F)$ a Banach space. A sequence class $X$ is {\it linearly stable} if $\Pi_{X;X}(E;F) \stackrel{1}{=} {\cal L}(E;F)$ for all Banach spaces $E$ and $F$. All sequence classes listed in Example \ref{ey8n}, as well as the other ones we shall work with in this paper, are linearly stable.

Under the two conditions below, $\Pi_{X;Y}$ is a Banach operator ideal (see \cite[Theorem 3.6]{botelho+campos}):\\
$\bullet$ $X$ and $Y$ are linearly stable.\\
$\bullet$ $(\lambda_j)_{j=1}^\infty\in Y(\mathbb{K})$ and $\|(\lambda_j)_{j=1}^\infty\|_{Y(\mathbb{K})} \leq \|(\lambda_j)_{j=1}^\infty\|_{X(\mathbb{K})}$ whenever $(\lambda_j)_{j= 1}^\infty \in X(\mathbb{K})$.

Of course we are interested in the case that $\Pi_{X;Y}$ is a Banach operator ideal, so, whenever we work with $\Pi_{X;Y}$, we will always suppose that the two conditions above hold. 

\section{Ideal-representable sequence classes}

The main results and examples of the paper are presented in this section. As is clear in the Introduction, for a class $X$ to be represented by an operator ideal, it is necessary that its scalar component $X(\mathbb{K})$ is the dual of a sequence space. We begin by studying these scalar-valued sequence spaces.

\begin{definition}\rm A \textit{scalar sequence space } is a Banach space $\lambda$ formed by scalar-valued sequences, that is, $\lambda \subseteq \mathbb{K}^\mathbb{N}$, endowed with the usual coordinatewise algebraic operations, satisfying the following conditions:

(i) $c_{00} \subseteq \lambda \stackrel{1}{\hookrightarrow} \ell_\infty$.

(ii) $(e_j)_{j=1}^\infty$ is a  Schauder basis for $\lambda$.

(iii)  $\|e_j\|_{\lambda}=1$ for every $j\in\mathbb{N}$.\\
We also define
$$ \lambda_*:=\{ (\varphi(e_j))_{j=1}^\infty:\varphi\in \lambda^* \} \subseteq \mathbb{K}^\mathbb{N}.$$
Of course, this construction is closely related to the K\"othe dual of a space of  scalar-valued sequences. This connection will become even clearer in Lemma \ref{C4S1L2}.

\end{definition}

As usual, for $1 \leq p < \infty$, by $p^* \in (1, \infty]$ we denote the conjugate of $p$, that is, $\frac1p + \frac{1}{p^*} = 1$.

\begin{examples}\label{exec}\rm The following are scalar sequence spaces:\\
(i) $\lambda= \ell_p$ with $\lambda_*=\ell_{p^*}$ for every $1< p <\infty$.\\
(ii) $\lambda=c_0$ with $\lambda_*=\ell_1$.\\
(iii) $\lambda=\ell_1$ with $\lambda_*=\ell_\infty$.\\
 (iv) $\lambda=h_M$ with $\lambda_*=\ell_{M^*}$, where $M$ is an Orlicz function such that $M(1)=1$ and $M^*$ is the function complementary to $M$ (see \cite[Chapter 4]{lt1}).
\end{examples}

We omit the easy proof of the following result. The few facts that are not straightforward follow from condition (ii) of the definition and from the continuity of the biorthogonal functionals associated to the Schauder basis.

\begin{proposition} Let $\lambda$ be a scalar sequence space. \\
{\rm (a)} $\lambda_*$ is a linear space with the usual algebraic operations, the map
	$$\|\cdot \|_{\lambda_*}\colon \lambda_* \to [0,\infty)~,~ \|(\varphi(e_j))_{j=1}^\infty \|_{\lambda_*}=\|\varphi\|, $$
is a norm on $\lambda_*$ and the correspondence
$$\varphi \in \lambda^* \mapsto (\varphi(e_j))_{j=1}^\infty \in \lambda_*$$
is an isometric isomorphism. In particular, $\lambda_*$ is a Banach space.\\
{\rm (b)} For every $(\alpha_j)_{j=1}^\infty\in \lambda$, $(\alpha_j)_{j=1}^\infty=\sum\limits_{j=1}^\infty \alpha_j e_j$.\\
{\rm (c)} $c_{00} \subseteq \lambda_* \stackrel{1}{\hookrightarrow} \ell_\infty$.\\
{\rm (d)} $\|e_j\|_{\lambda_*}=1$ for every $j\in\mathbb{N}$.\label{C4S0P1}
\end{proposition}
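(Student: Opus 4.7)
The plan is to establish (b) first, use it as a lemma to deduce (a), and then derive (c) and (d) from (a) together with the biorthogonal functionals associated to the Schauder basis $(e_j)_{j=1}^\infty$.

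For (b): given $(\alpha_j)_{j=1}^\infty \in \lambda$, condition (ii) of the definition yields unique scalars $c_j$ with $(\alpha_j)_{j=1}^\infty = \sum_{j=1}^\infty c_j e_j$, the convergence being in $\lambda$. The contractive inclusion $\lambda \stackrel{1}{\hookrightarrow} \ell_\infty$ from condition (i) is nothing but the set-theoretic inclusion, so convergence in $\lambda$ forces convergence in $\ell_\infty$, hence coordinatewise; comparing coordinates gives $c_j = \alpha_j$.

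For (a), I first observe that $\lambda_*$ is a linear subspace of $\mathbb{K}^{\mathbb{N}}$ under coordinatewise operations because $\varphi \mapsto (\varphi(e_j))_{j=1}^\infty$ respects linear combinations of functionals. The crucial point is the injectivity of this map: if $\varphi(e_j) = 0$ for all $j$, then for any $(\alpha_j)_{j=1}^\infty \in \lambda$, continuity of $\varphi$ combined with (b) gives $\varphi((\alpha_j)_{j=1}^\infty) = \sum_j \alpha_j \varphi(e_j) = 0$, hence $\varphi = 0$. This makes the prescription $\|(\varphi(e_j))_{j=1}^\infty\|_{\lambda_*} := \|\varphi\|$ unambiguous, and the inverse map transports linear structure and norm from $\lambda^*$, turning the correspondence into an isometric isomorphism by design; surjectivity holds by the very definition of $\lambda_*$.

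For (c), the estimate $|\varphi(e_j)| \leq \|\varphi\|\,\|e_j\|_\lambda = \|\varphi\|$ shows $\sup_j|\varphi(e_j)| \leq \|(\varphi(e_j))_{j=1}^\infty\|_{\lambda_*}$, giving the contractive embedding into $\ell_\infty$. For $c_{00} \subseteq \lambda_*$, invoke the coordinate functionals $e_j^* \in \lambda^*$ (continuous by the standard Schauder basis argument): any finitely supported $(\alpha_j)_{j=1}^\infty$ with support in $\{1,\dots,N\}$ equals $(\varphi(e_j))_{j=1}^\infty$ for $\varphi = \sum_{j=1}^N \alpha_j e_j^* \in \lambda^*$. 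Finally for (d), the sequence $e_k$ regarded as an element of $\lambda_*$ corresponds to $e_k^* \in \lambda^*$; the inequality $|e_k^*((\alpha_j)_{j=1}^\infty)| = |\alpha_k| \leq \|(\alpha_j)_{j=1}^\infty\|_\infty \leq \|(\alpha_j)_{j=1}^\infty\|_\lambda$ gives $\|e_k^*\| \leq 1$, while $e_k^*(e_k) = 1 = \|e_k\|_\lambda$ gives $\|e_k^*\| \geq 1$.

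No step is a genuine obstacle, but the one subtlety worth flagging is that (b) must be proved before one can even make sense of the norm in (a): without the unique expansion of elements of $\lambda$ in terms of $(e_j)_{j=1}^\infty$, the map $\varphi \mapsto (\varphi(e_j))_{j=1}^\infty$ need not be injective and the assignment $\|(\varphi(e_j))_{j=1}^\infty\|_{\lambda_*} := \|\varphi\|$ could be multi-valued.
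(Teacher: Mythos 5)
Your proof is correct and matches the route the paper itself indicates: the paper omits the argument but states that the non-straightforward points follow from condition (ii) of the definition together with the continuity of the biorthogonal functionals, which is precisely what you use (item (b) via the Schauder basis and the contractive inclusion into $\ell_\infty$, injectivity of $\varphi \mapsto (\varphi(e_j))_{j=1}^\infty$ to make the norm well defined, and the functionals $e_j^*$ for (c) and (d)). Your remark that (b) must precede (a) to guarantee that the norm on $\lambda_*$ is single-valued is exactly the right subtlety to flag.
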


Now we can define when a sequence class is represented by an operator ideal.

\begin{definition}\rm
A sequence class $X$ is said to be  \textit{ideal-representable} if there exists a Banach operator ideal $\cal I$ and a scalar sequence space $\lambda$  such that, for each Banach space $E$, the operator
	$$ \Psi\colon  \mathcal{I}(\lambda, E)\longrightarrow X(E)~, ~\Psi(u)=(u(e_j))_{j=1}^\infty,$$
is a well defined isometric isomorphism.  In this case, this fact shall be denoted by the symbol $\mathcal{I}(\lambda, \cdot)\approx X$, and we say that $X$ is $(\mathcal{I},\lambda)$-representable, or that $X$ is $\mathcal{I}$-representable or that the Banach operator ideal $\mathcal{I}$ represents $X$.
\end{definition}

\begin{remark}\rm (a) Although the operator $\Psi$ depends on the Banach space $E$, for simplicity we just write $\Psi$.\\
(b) If the operator $\Psi$  is an isometric isomorphism for a fixed Banach space $E$, this fact shall be denoted by $\mathcal{I}(\lambda, E)\approx X(E)$.\\
(c) If the operator $\Psi$ is well defined, then its linearity and injectivity are automatic.
\end{remark}

\begin{example}\rm As we saw in the Introduction, for $1 < p < \infty$, the class $\ell_p^w$ is $({\cal L}, \ell_{p^*})$-representable, $\ell_p^u$ is $({\cal K}, \ell_{p^*})$-representable and $\ell_p\langle \cdot \rangle$ is $({\cal N}, \ell_{p^*})$-representable. Moreover, $\ell_\infty(\cdot)$ is $({\cal L}, \ell_1)$-representable, $\ell_1^w$ is $({\cal L}, c_0)$-representable and $\ell_1^u$ is $({\cal K}, c_0)$-representable.
\end{example}

Next we prove some properties of ideal-representable classes which will be helpful later, in particular to give the first examples of non ideal-representable classes.

\begin{proposition}\label{p9me} If the sequence class $X$ is $(\mathcal{I},\lambda)$-representable, then $X$ is linearly stable and $X(\mathbb{K})\stackrel{1}{=}\lambda_* \stackrel{1}{=} \lambda^*$. In particular, if $X$ is ideal-representable, then $X(\mathbb{K})$ is a dual space.
\end{proposition}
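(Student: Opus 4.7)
The plan is to split the statement into two nearly independent tasks: the identification $X(\mathbb{K})\stackrel{1}{=}\lambda_*\stackrel{1}{=}\lambda^*$, and the linear stability of $X$, both extracted from the single input that $\Psi\colon{\cal I}(\lambda,E)\to X(E)$, $u\mapsto(u(e_j))_{j=1}^\infty$, is an isometric isomorphism for every Banach space $E$.

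For the identification I would specialize $E=\mathbb{K}$ in the representation, which immediately gives $X(\mathbb{K})\stackrel{1}{=}{\cal I}(\lambda,\mathbb{K})$. Then I would invoke the standard fact, valid for any Banach operator ideal, that ${\cal I}(\lambda,\mathbb{K})\stackrel{1}{=}\lambda^*$: every $\varphi\in\lambda^*$ factors as ${\rm id}_{\mathbb{K}}\circ\varphi$, so by the ideal property $\|\varphi\|_{{\cal I}}\leq\|\varphi\|$, while the reverse inequality is built into the definition of a Banach operator ideal. Proposition \ref{C4S0P1}(a), which realizes $\lambda^*\stackrel{1}{=}\lambda_*$ via $\varphi\mapsto(\varphi(e_j))_{j=1}^\infty$, then closes this half; note that the composed isometry is exactly $\Psi$ restricted to ${\cal I}(\lambda,\mathbb{K})$.

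For linear stability I would fix $E,F$ and $u\in{\cal L}(E;F)$, pick an arbitrary $(x_j)_{j=1}^\infty\in X(E)$, and use the surjectivity of $\Psi$ to pull it back to the unique $v\in{\cal I}(\lambda,E)$ with $v(e_j)=x_j$ and $\|v\|_{{\cal I}}=\|(x_j)\|_{X(E)}$. The ideal property then gives $u\circ v\in{\cal I}(\lambda,F)$ with $\|u\circ v\|_{{\cal I}}\leq\|u\|\cdot\|v\|_{{\cal I}}$, and $\Psi(u\circ v)=(u(x_j))_{j=1}^\infty$. Hence $(u(x_j))\in X(F)$ with $\|(u(x_j))\|_{X(F)}\leq\|u\|\cdot\|(x_j)\|_{X(E)}$, that is, $u\in\Pi_{X;X}(E;F)$ with $\|\widehat{u}\|\leq\|u\|$.

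The reverse inequality $\|u\|\leq\|\widehat{u}\|$ is where I expect the only real friction. Evaluating $\widehat{u}$ on $(x,0,0,\dots)\in c_{00}(E)\subseteq X(E)$ and using $X(F)\stackrel{1}{\hookrightarrow}\ell_\infty(F)$ gives $\|u(x)\|_F\leq\|\widehat{u}\|\cdot\|(x,0,\dots)\|_{X(E)}$, so I need the bound $\|(x,0,\dots)\|_{X(E)}\leq\|x\|_E$. To obtain this I would write $(x,0,0,\dots)=\Psi(\varphi_1\otimes x)$, where $\varphi_1\in\lambda^*$ is the first coordinate functional, of norm $1$ by Proposition \ref{C4S0P1}(d). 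Factoring $\varphi_1\otimes x$ as $(\alpha\mapsto\alpha x)\circ\varphi_1$ and applying the ideal inequality, together with the fact from the first half that $\|\varphi_1\|_{{\cal I}}=\|\varphi_1\|=1$, yields $\|\varphi_1\otimes x\|_{{\cal I}}\leq\|x\|_E$ and hence $\|(x,0,\dots)\|_{X(E)}\leq\|x\|_E$. This delivers both equalities $X(\mathbb{K})\stackrel{1}{=}\lambda_*\stackrel{1}{=}\lambda^*$ and the linear stability of $X$, and the ``in particular'' assertion is an immediate consequence of the first.
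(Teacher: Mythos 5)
Your proposal is correct and follows essentially the same route as the paper: identify $X(\mathbb{K})$ with ${\cal I}(\lambda,\mathbb{K})\stackrel{1}{=}\lambda^*\stackrel{1}{=}\lambda_*$ by specializing $E=\mathbb{K}$, prove $\|\widehat{u}\|\leq\|u\|$ by pulling sequences back through $\Psi$ and using the ideal property, and prove the reverse inequality by evaluating on $(x,0,0,\ldots)$ and bounding the ideal norm of the rank-one operator $\varphi_1\otimes x$. The only cosmetic differences are that you bound $\|\varphi_1\otimes x\|_{\cal I}$ via $\|\varphi_1\|_{\cal I}=1$ rather than inserting ${\rm id}_{\mathbb{K}}$ as the paper does, and you construct the rank-one preimage directly instead of invoking surjectivity of $\Psi$.
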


\begin{proof} The last isometric equality was established in Proposition  \ref{C4S0P1}. Given $u\in \mathcal{L}(E,F)$ and $x=(x_j)_{j=1}^\infty \in  X(E)$, the surjectivity of $\Psi$ gives $u_x\in \mathcal{I}(\lambda,E)$ such that $(u_x(e_j))_{j=1}^\infty=(x_j)_{j=1}^\infty$. The ideal property of $\cal I$ gives  $u\circ u_x \in \mathcal{I}(\lambda,F)$, hence
	$$ (u(x_j))_{j=1}^\infty = (u\circ u_x(e_j))_{j=1}^\infty \in X(F)$$
because $X$ is $\mathcal{I}$-representable. This proves that  $\Pi_{X;X}(E,F)=\mathcal{L}(E,F)$. Moreover,
	\begin{align*}
		\|u\|_{X;X}&=\sup_{x=(x_j)_{j=1}^\infty \in B_{X(E)}} \|(u(x_j))_{j=1}^\infty\|_{X(F)}
		=\sup_{x \in B_{X(E)}} \|(u\circ u_x(e_j))_{j=1}^\infty\|_{X(F)}\\
		&=\sup_{x \in B_{X(E)}} \|u\circ u_x\|_{\mathcal{I}}\leq \|u\| \cdot \sup_{x \in B_{X(E)}}  \|u_x\|_{\mathcal{I}} = \|u\| \cdot \sup_{x \in B_{X(E)}}  \|\Psi(u_x)\|_{X(E)}\\
		&=\|u\|\cdot \sup_{x \in B_{X(E)}} \|(u_x(e_j))_{j=1}^\infty\|_{X(E)}
		=\|u\|\cdot \sup_{(x_j)_{j=1}^\infty \in B_{X(E)}} \|(x_j)_{j=1}^\infty\|_{X(E)}=\|u\|.
	\end{align*}
To show the reverse inequality, let $x \in E$ be given. 
Since $(x,0,0,\ldots) \in c_{00}(E)\subseteq X(E)$ and $X$ is $(\mathcal{I},\lambda)$-representable, there is  $u_x\in \mathcal{I}(\lambda,E)$ so that $u_x(e_1) = x$, $u_x(e_j) = 0$ for every $j > 1$ and 
 $\|(x,0,0,\ldots)\|_{X(E)}= \|u_x\|_{\mathcal{I}}$. Considering the functional $\varphi$ and the operator $\phi$ given by	$$\varphi\colon \lambda \longrightarrow \mathbb{K} ~,~ \varphi((\beta_j)_{j=1}^\infty):=\beta_1~~,~~\phi\colon \mathbb{K} \longrightarrow E~,~ \phi(\alpha):=\alpha x,  $$
it is easy to check that 
	$\|\varphi\|\leq 1$, $\|\phi\|=\|x\|$ and 
$u_x =  \phi\circ {\rm id}_{\mathbb{K}}\circ \varphi$.   Therefore, for every $x \in E$, using that $u$ is $(X;X)$-summing,
	\begin{align*}
		\|u(x)\|&=\|(u(x),0,0,\ldots)\|_\infty \leq \|(u(x),0,0,\ldots)\|_{X(F)} \leq \|u\|_{X;X}\cdot  \|(x,0,0,\ldots)\|_{X(E)}\\
&= \|u\|_{X;X}\cdot \|u_x\|_{\mathcal{I}} =\|u\|_{X;X}\cdot \|\phi\circ {\rm id}_{\mathbb{K}}\circ \varphi\|_{\mathcal{I}} \leq \|u\|_{X;X}\cdot \|x\|.
	\end{align*}
This proves that $\|u\|\leq \|u\|_{X;X}$ and establishes the linear stability of  $X$. 	

Finally, we have ${\cal I}(\lambda, \mathbb{K}) \stackrel{1}{=} {\cal L}(\lambda, \mathbb{K}) = \lambda^*$ because $\cal I$ is a Banach operator ideal. So,
	\begin{align*}
		X(\mathbb{K})=\Psi\left( \mathcal{I}(\lambda,\mathbb{K}) \right)
		&=\left\{ (\varphi(e_j))_{j=1}^\infty:\varphi\in \mathcal{I}(\lambda,\mathbb{K}) \right\}=
		\left\{ (\varphi(e_j))_{j=1}^\infty:\varphi\in \lambda^* \right\}=\lambda_* \mbox{~\,and}
	\end{align*}
$$\|(\varphi(e_j))_{j=1}^\infty\|_{X(\mathbb{K})}=\|\varphi\|_{\mathcal{I}}=\|\varphi\|=\|(\varphi(e_j))_{j=1}^\infty\|_{\lambda_*}.$$
\end{proof}

  The proposition above says that non linearly stable classes are not ideal-representable. Next we give examples of non ideal-representable linearly stable sequence classes.

 \begin{example}\rm It is well known that $c_0$ is not isomorphic to a dual space, so the linearly stable sequence classes $c_0(\cdot)$ and $c_0^w$ are not ideal-representable by Proposition \ref{p9me}. The space $c$ of convergent sequences is separable and contains a copy of $c_0$, hence $c$ is not isomorphic to a dual space by \cite[Theorem 6.13]{carothers}. Therefore the linearly stable sequence class $c(\cdot)$ is not ideal-representable either. Further counterexamples shall be given later.
 \end{example}

 Our next purposes are to characterize ideal-representable classes and to exhibit an operator ideal that represents a given ideal-representable class. The next H\"older-type inequality follows easily from Proposition \ref{C4S0P1}(b).

 \begin{lemma} \label{C4S1L2} If $\lambda$ is a scalar sequence space, $(\alpha_j)_{j=1}^\infty \in \lambda$ and $(\beta_j)_{j=1}^\infty \in \lambda_*$, then the series $\sum\limits_{j=1}^\infty \alpha_j \beta_j$ converges and $$\left| \sum_{j=1}^\infty \alpha_j \beta_j \right|\leq \|(\alpha_j)_{j=1}^\infty\|_{\lambda} \cdot \|(\beta_j)_{j=1}^\infty\|_{\lambda_*}.$$
	In particular, $\left| \sum\limits_{j=n}^m \alpha_j \beta_j \right|\leq \left\|\sum\limits_{j=n}^{m} \alpha_j e_j\right\|_{\lambda} \cdot \|(\beta_j)_{j=1}^\infty\|_{\lambda_*}$ for all $m>n\in\mathbb{N}$.
 \end{lemma}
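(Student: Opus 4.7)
The plan is to use Proposition \ref{C4S0P1}(a) to realize $(\beta_j)_{j=1}^\infty \in \lambda_*$ as the image of some $\varphi \in \lambda^*$ under the isometric isomorphism $\varphi \mapsto (\varphi(e_j))_{j=1}^\infty$, and then combine this with the Schauder basis expansion given by Proposition \ref{C4S0P1}(b).

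First, given $(\beta_j)_{j=1}^\infty \in \lambda_*$, I would invoke Proposition \ref{C4S0P1}(a) to obtain a (unique) $\varphi \in \lambda^*$ with $\varphi(e_j)=\beta_j$ for all $j$, and with $\|\varphi\|=\|(\beta_j)_{j=1}^\infty\|_{\lambda_*}$. Next, for $(\alpha_j)_{j=1}^\infty \in \lambda$, Proposition \ref{C4S0P1}(b) says $(\alpha_j)_{j=1}^\infty=\sum_{j=1}^\infty \alpha_j e_j$ in the norm of $\lambda$. Applying the continuous linear functional $\varphi$ to this convergent series yields
\[
\varphi\bigl((\alpha_j)_{j=1}^\infty\bigr)=\sum_{j=1}^\infty \alpha_j\,\varphi(e_j)=\sum_{j=1}^\infty \alpha_j\beta_j,
\]
so the series converges. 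The bound
\[
\left|\sum_{j=1}^\infty \alpha_j\beta_j\right|=\bigl|\varphi\bigl((\alpha_j)_{j=1}^\infty\bigr)\bigr|\leq \|\varphi\|\cdot \|(\alpha_j)_{j=1}^\infty\|_\lambda=\|(\alpha_j)_{j=1}^\infty\|_\lambda\cdot \|(\beta_j)_{j=1}^\infty\|_{\lambda_*}
\]
is then immediate from the operator norm of $\varphi$.

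For the ``in particular'' inequality, I would apply the first part of the lemma to the truncated sequence $(\gamma_j)_{j=1}^\infty$ defined by $\gamma_j=\alpha_j$ if $n\leq j\leq m$ and $\gamma_j=0$ otherwise. Then $(\gamma_j)_{j=1}^\infty \in c_{00}\subseteq \lambda$, with $\|(\gamma_j)_{j=1}^\infty\|_\lambda=\bigl\|\sum_{j=n}^{m}\alpha_j e_j\bigr\|_\lambda$, and the series $\sum_j \gamma_j\beta_j$ reduces to $\sum_{j=n}^m \alpha_j\beta_j$; so the bound follows directly from the first part.

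There is no real obstacle here: the statement is simply a reformulation, in the coordinate language of $\lambda_*$, of the duality pairing $\varphi((\alpha_j)_{j=1}^\infty)$. The only thing one needs to be careful about is invoking continuity of $\varphi$ correctly to pass from the (norm-)convergent series $\sum_j \alpha_j e_j$ to the scalar series $\sum_j \alpha_j \beta_j$, which is justified precisely by $\varphi \in \lambda^*$.
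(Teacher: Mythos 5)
Your proof is correct and follows exactly the route the paper intends: the authors omit the proof, remarking only that the lemma ``follows easily from Proposition \ref{C4S0P1}(b)'', and your argument---representing $(\beta_j)_{j=1}^\infty$ by $\varphi\in\lambda^*$ via Proposition \ref{C4S0P1}(a) and applying $\varphi$ to the norm-convergent expansion $(\alpha_j)_{j=1}^\infty=\sum_j\alpha_je_j$---is precisely that argument, with the truncation handling the ``in particular'' clause correctly.
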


    A sequence class $X$ is {\it finitely determined} if, for any Banach space $E$ and every $E$-valued sequence $(x_j)_{j=1}^\infty$, it holds $(x_j)_{j=1}^\infty \in X(E)$ if and only if $\sup\limits_{k \in \mathbb{N}}\|(x_j)_{j=1}^k \|_{X(E)}< +\infty$, and, in this case, $\|(x_j)_{j=1}^\infty \|_{X(E)}= \sup\limits_{k \in \mathbb{N}}\|(x_j)_{j=1}^k \|_{X(E)} $ (see \cite{botelho+campos}). Here, $(x_j)_{j=1}^k = (x_1, \ldots, x_k, 0,0,\ldots)$. The classes $\ell_p(\cdot), \ell_p^w, \ell_\infty$, RAD, $\ell_p\langle\cdot\rangle$ and $\ell_p^{\rm mid}$ are finitely determined, whereas the classes $c_0(\cdot), c_0^w$, $c(\cdot), \ell_p^{u}$ and Rad are not. 

\begin{proposition} \label{C4S1P2}
Let $\lambda$ be a scalar sequence space. Defining, for each Banach space  $E$,
	$$\lambda_*^w(E):=\left\{(x_j)_{j=1}^\infty  \in E^{\mathbb{N}} : (\varphi(x_j))_{j=1}^\infty \in \lambda_*  {\rm ~for~every~}\varphi\in E^* \right\},$$
we have:

 \noindent {\rm (i)} $ \|(x_j)_{j=1}^\infty\|_{w,\lambda_*}:=\sup\limits_{\varphi \in B_{E^*}} \|(\varphi(x_j))_{j=1}^\infty\|_{\lambda_*}<+\infty $ for every $(x_j)_{j=1}^\infty\in \lambda_*^w(E)$.\\
 \noindent {\rm (ii)} $\lambda_*^w(E)$ is a linear subspace of $E^{\mathbb{N}}$ on which the map $\|\cdot\|_{w,\lambda_*}\colon \lambda_*^w(E) \longrightarrow [0,\infty)$ is a norm.\\
\noindent {\rm (iii)} The operator
		$$ \Psi\colon  \mathcal{L}(\lambda, E)\longrightarrow \lambda_*^w(E)~, ~\Psi(u)=(u(e_j))_{j=1}^\infty,$$
is a well defined isometric isomorphism. In particular,  $\left( \lambda_*^w(E),\|\cdot\|_{w,\lambda_*}\right)$ is a Banach space.\\
 \noindent {\rm (iv)} The rule $E \mapsto \lambda_*^w(E)$ is a   $(\mathcal{L},\lambda)$-representable sequence class with $\lambda_*^w(\mathbb{K})\stackrel{1}{=}\lambda_*$. In particular, the class $\lambda_*^w$ is linearly stable.\\
\noindent {\rm (v)} If $X$ is a linearly stable sequence class so that $X(\mathbb{K})\stackrel{1}{=}\lambda_*$, then $X\stackrel{1}{\hookrightarrow} \lambda_*^w$.\\
\noindent {\rm (vi)}	Suppose that $\lambda_*$ enjoys the following property: For each  $(\alpha_j)_{j=1}^\infty \in \mathbb{K}^{\mathbb{N}}$, $(\alpha_j)_{j=1}^\infty \in \lambda_*$ if and only if $\sup\limits_{n\in \mathbb{N}} \|(\alpha_j)_{j=1}^n\|_{\lambda_*}<+\infty$ and, in this case, $\|(\alpha_j)_{j=1}^\infty\|_{\lambda_*}=\sup\limits_{n\in \mathbb{N}} \|(\alpha_j)_{j=1}^n\|_{\lambda_*}$. Then the class $\lambda_*^w$ is finitely determined.
\end{proposition}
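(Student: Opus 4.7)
The plan is to prove the six items in sequence, relying on Lemma~\ref{C4S1L2}, Proposition~\ref{C4S0P1}, and the density of $c_{00}$ in $\lambda$ that follows from the Schauder-basis axiom. The isomorphism in (iii) carries the technical weight; the remaining items are formal consequences.

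For (i), I would reinterpret the supremum as the operator norm of the linear map $T\colon E^*\to\lambda_*$, $T(\varphi)=(\varphi(x_j))_{j=1}^\infty$, and invoke the closed graph theorem: if $\varphi_n\to\varphi$ in $E^*$ and $T(\varphi_n)\to y$ in $\lambda_*$, then coordinate-wise $\varphi_n(x_k)\to\varphi(x_k)$, while $\lambda_*\stackrel{1}{\hookrightarrow}\ell_\infty$ forces coordinate-wise convergence of $T(\varphi_n)$ to $y$, so $y=T(\varphi)$. Item (ii) is routine, with nondegeneracy of $\|\cdot\|_{w,\lambda_*}$ a consequence of Hahn--Banach.

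The heart of the argument is (iii). Well-definedness is immediate since $\varphi\circ u\in\lambda^*$ for $u\in\mathcal{L}(\lambda,E)$ and $\varphi\in E^*$, and Proposition~\ref{C4S0P1}(a) places the resulting coefficient sequence in $\lambda_*$. The isometry
\[
\|\Psi(u)\|_{w,\lambda_*}=\sup_{\varphi\in B_{E^*}}\|\varphi\circ u\|_{\lambda^*}=\sup_{\varphi\in B_{E^*}}\sup_{\alpha\in B_\lambda}|\varphi(u(\alpha))|=\|u\|
\]
is obtained by swapping suprema. The main step, and the only real obstacle in the whole proposition, is surjectivity: given $(x_j)\in\lambda_*^w(E)$, define $u$ on $c_{00}$ by $u(e_j):=x_j$; for any finite $\alpha=\sum_{j=1}^n\alpha_je_j$ and $\varphi\in B_{E^*}$, the finite-sum form of Lemma~\ref{C4S1L2} yields
\[
|\varphi(u(\alpha))|=\Bigl|\sum_{j=1}^n\alpha_j\varphi(x_j)\Bigr|\leq\|\alpha\|_\lambda\cdot\|(\varphi(x_j))_{j=1}^\infty\|_{\lambda_*}\leq\|\alpha\|_\lambda\cdot\|(x_j)\|_{w,\lambda_*},
\]
so $\|u(\alpha)\|\leq\|\alpha\|_\lambda\cdot\|(x_j)\|_{w,\lambda_*}$; density of $c_{00}$ in $\lambda$ then extends $u$ continuously to $\lambda$ with $\Psi(u)=(x_j)$.

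Items (iv), (v), (vi) are formal. The sequence-class axioms for $\lambda_*^w$ transfer directly from those of $\lambda_*$ (using $\lambda_*\stackrel{1}{\hookrightarrow}\ell_\infty$ to recover $\|x_k\|=\sup_\varphi|\varphi(x_k)|\leq\|(x_j)\|_{w,\lambda_*}$, and Proposition~\ref{C4S0P1}(d) for the unit vectors); together with (iii), this makes $\lambda_*^w$ $(\mathcal{L},\lambda)$-representable, and Proposition~\ref{p9me} supplies both linear stability and $\lambda_*^w(\mathbb{K})\stackrel{1}{=}\lambda_*$. For (v), linear stability of $X$ applied to $\varphi\in B_{E^*}$ yields an induced operator $\widehat{\varphi}\colon X(E)\to X(\mathbb{K})\stackrel{1}{=}\lambda_*$ of norm at most $1$, so $\|(\varphi(x_j))\|_{\lambda_*}\leq\|(x_j)\|_{X(E)}$; taking sup over $\varphi$ delivers the embedding. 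For (vi), the hypothesis on $\lambda_*$ allows a clean swap of suprema: whenever $(x_j)\in\lambda_*^w(E)$,
\[
\sup_n\|(x_j)_{j=1}^n\|_{w,\lambda_*}=\sup_\varphi\sup_n\|(\varphi(x_j))_{j=1}^n\|_{\lambda_*}=\sup_\varphi\|(\varphi(x_j))\|_{\lambda_*}=\|(x_j)\|_{w,\lambda_*};
\]
conversely, finiteness of the left side forces $\|(\varphi(x_j))_{j=1}^n\|_{\lambda_*}$ to be uniformly bounded for each fixed $\varphi$, and the hypothesis on $\lambda_*$ reconstructs $(\varphi(x_j))\in\lambda_*$ with norm at most that bound, placing $(x_j)$ in $\lambda_*^w(E)$.
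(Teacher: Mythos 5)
Your proof is correct and follows essentially the same route as the paper's: a closed graph argument for (i), the H\"older-type Lemma \ref{C4S1L2} together with the Schauder basis of $\lambda$ for the construction of the inverse in (iii), and the same formal deductions for (iv)--(vi). The only cosmetic differences are that you get the isometry in (iii) in one stroke by swapping suprema (the paper proves the two inequalities separately) and that you build the surjection by extending from $c_{00}$ by density rather than by checking directly that $\sum_{j}\alpha_j x_j$ converges; both variants rest on exactly the same estimates.
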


\begin{proof} Is is clear that $\lambda_*^w(E)$ is a linear subspace of $E^{\mathbb{N}}$. The definition of $\lambda_*^w(E)$ yields that $u\colon E^* \longrightarrow \lambda_*$ given by $u(\varphi)=(\varphi(x_j))_{j=1}^\infty$ is a well defined linear operator. Using that $\lambda_* \stackrel{1}{\hookrightarrow} \ell_\infty$ (Proposition \ref{C4S0P1}(c)), a standard closed graph argument gives the continuity of $u$, hence
$$\sup_{\varphi\in B_{E^*}} \|(\varphi(x_j))_{j=1}^\infty\|_{\lambda_*}=\sup_{\varphi\in B_{E^*}} \|u(\varphi)\|_{\lambda_*}=\|u\|<+\infty,$$
proving (i). The norm axioms are all easy (the implication $\|x\|_{w, \lambda_*} = 0 \Rightarrow x = 0$ depends on the Hahn-Banach Theorem), so (ii) is proved.

(iii) Given $u\in \mathcal{L}(\lambda,E)$ and $\varphi \in E^*$,  we have $\varphi \circ u \in \mathcal{L}(\lambda, \mathbb{K})=\lambda^*$. From the definition of  $\lambda_*$, $ (\varphi  (u(e_j)))_{j=1}^\infty= (\varphi \circ u(e_j))_{j=1}^\infty \in\lambda_*,$ thus $(u(e_j))_{j=1}^\infty\in \lambda_*^w(E)$. This proves that $\Psi$ is well defined. Its linearity is obvious. From
\begin{align*}		\|\Psi(u)\|_{w,\lambda_*}&=\|(u(e_j))_{j=1}^\infty\|_{w,\lambda_*}= 
\sup_{\varphi \in B_{E^*}} \|(\varphi \circ u(e_j))_{j=1}^\infty\|_{\lambda_*}    		=\sup_{\varphi \in B_{E^*}} \|\varphi \circ u\|\leq 
		\|u\|,
	\end{align*}
we obtain the continuity of $\Psi$. Combining this continuity with the fact that $(e_j)_{j=1}^\infty$ is a  Schauder basis for $\lambda$, we have $u=0 \Leftrightarrow u(e_j)=0$ for every $j\in\mathbb{N}$. This implies the injectivity of $\Psi$. Given $(x_j)_{j=1}^\infty \in \lambda_*^w(E)$, consider $u\colon \lambda \longrightarrow E$ given by $u((\alpha_j)_{j=1}^\infty)= \sum\limits_{j=1}^{\infty} \alpha_j x_j.$
 Given $(\alpha_j)_{j=1}^\infty \in \lambda$, the series $\sum\limits_{j=1}^{\infty} \alpha_j e_j$ converges in $\lambda$ because $(e_j)_{j=1}^\infty$ is a Schauder basis for $\lambda$. So, given $\varepsilon >0$ there exists $n_0\in\mathbb{N}$ so that  $$\left\|\sum_{j=n}^{m} \alpha_j e_j\right\|_{\lambda}< \frac{\varepsilon}{\|(x_j)_{j=1}^\infty\|_{w,\lambda_*}} \mbox{ whenever } m>n\geq n_0.$$
	Applying \ref{C4S1L2} we get
	\begin{align*}
		\left\|\sum_{j=n}^{m} \alpha_j x_j\right\|&=\sup_{\varphi \in B_{E^*} } \left| \sum_{j=n}^m \alpha_j \varphi(x_j)\right|\leq \sup_{\varphi \in B_{E^*} } \left\|\sum_{j=n}^{m} \alpha_j e_j\right\|_{\lambda}\cdot \|(\varphi(x_j))_{j=1}^\infty\|_{\lambda_*}\\
		&=  \left\|\sum_{j=n}^{m} \alpha_j e_j\right\|_{\lambda}\cdot\sup_{\varphi \in B_{E^*} }  \|(\varphi(x_j))_{j=1}^\infty\|_{\lambda_*}= \left\|\sum_{j=n}^{m} \alpha_j e_j\right\|_{\lambda}\cdot   \|(x_j)_{j=1}^\infty\|_{w,\lambda_*}<\varepsilon
	\end{align*}
  for all $m>n\geq n_0$. This proves that the series $\sum\limits_{j=1}^{\infty} \alpha_j x_j$ converges in $E$, which means that $u$ is well defined. Its linearity and the equality $(u(e_j))_{j=1}^\infty = (x_j)_{j=1}^\infty$ are obvious. By Lemma \ref{C4S1L2},
	\begin{align*}
		\left\|u((\alpha_j)_{j=1}^\infty)\right\|&=\left\|\sum_{j=1}^{\infty} \alpha_j x_j\right\|= \sup_{\varphi \in B_{E^*} } \left| \sum_{j=1}^\infty \alpha_j \varphi(x_j)\right|\leq \sup_{\varphi \in B_{E^*} } \|(\alpha_j)_{j=1}^\infty\|_{\lambda}\cdot \|(\varphi(x_j))_{j=1}^\infty\|_{\lambda_*}\\
		&=\|(\alpha_j)_{j=1}^\infty\|_{\lambda}\cdot \|(x_j)_{j=1}^\infty\|_{w,\lambda_*}=\|(\alpha_j)_{j=1}^\infty\|_{\lambda}\cdot \|(u(e_j))_{j=1}^\infty\|_{w,\lambda_*}.
	\end{align*}
This gives that $u$ is continuous and $\|u\|\leq \|(u(e_j))_{j=1}^\infty\|_{w,\lambda_*} = \|\Psi(u)\|_{w,\lambda_*}$, which proves (iii).

(iv) Given a Banach space $E$, we already know that $\lambda_*^w(E)$ is a Banach space of $E$-valued sequences. From Proposition \ref{C4S0P1}(c) and the Hahn-Banach Theorem it follows easily that $c_{00}(E)\subseteq \lambda_*^w(E) \stackrel{1}{\hookrightarrow} \ell_\infty(E)$. Combining the isomorphism from item (iii) for $E = \mathbb{K}$ with the one from Proposition \ref{C4S0P1}, we obtain $\lambda_*^w(\mathbb{K})\stackrel{1}{=} \lambda_*$. Thus, $\|e_j\|_{w,\lambda_*}=\|e_j\|_{\lambda_*}=1$ for every $j\in\mathbb{N}$, where the last equality comes from Proposition \ref{C4S0P1}(d). It is proved that the rule  
$E \mapsto \lambda_*^w(E)$ is a sequence class, which is $(\mathcal{L},\lambda)$-representable by item (iii). The linear stability of $ \lambda_*^w$ follows from Proposition \ref{p9me}.

(v) The statement follows immediately from the linear stability of $X$ applied to the functionals $\varphi \in E^*$.

(vi) Let $(x_j)_{j=1}^\infty \in E^\mathbb{N}$ be given. Suppose that $(x_j)_{j=1}^\infty \in  \lambda_*^w(E)$. For each $n \in \mathbb{N}$, we know that $(x_j)_{j=1}^n \in c_{00}(E)\subseteq \lambda_*^w(E)$ and that, for each $\varphi \in E^*$, the linear stability of $\lambda_*^w$ gives $(\varphi(x_j))_{j=1}^\infty \in \lambda_*^w(\mathbb{K}) \stackrel{1}{=} \lambda_*$. Applying the property that $\lambda_*$ enjoys by assumption we get
\begin{align*}
		\|(x_j)_{j=1}^n\|_{w,\lambda_*}&=\sup_{\varphi \in B_{E^*}}\|(\varphi(x_j))_{j=1}^n\|_{\lambda_*}\leq \sup_{\varphi \in B_{E^*}}\|(\varphi(x_j))_{j=1}^\infty\|_{\lambda_*}=\|(x_j)_{j=1}^\infty\|_{w,\lambda_*}.
	\end{align*}
Thus, $\sup\limits_{n\in\mathbb{N}} \|(x_j)_{j=1}^n\|_{w,\lambda_*} \leq \|(x_j)_{j=1}^\infty\|_{w,\lambda_*} < +\infty$. Conversely, suppose that $\sup\limits_{n\in\mathbb{N}} \|(x_j)_{j=1}^n\|_{w,\lambda_*} < +\infty$.   For $n\in\mathbb{N}$ and  $0 \neq \phi \in E^*$,
	\begin{align}    		\|(\phi(x_j))_{j=1}^n\|_{\lambda_*}&\!=\!\|\phi\|\cdot\left\|\left(\frac{\phi}{\|\phi\|}(x_j)\right)_{j=1}^n\right\|_{\lambda_*}
		\!\leq \|\phi\|\cdot \sup_{\varphi \in B_{E^*}} \|(\varphi(x_j))_{j=1}^n\|_{\lambda_*}
		\!=\!\|\phi\|\!\cdot\!  \|(x_j)_{j=1}^n\|_{w,\lambda_*}.\label{3fyc}
	\end{align}
It follows that  $\sup\limits_{n\in\mathbb{N}}	\|(\phi(x_j))_{j=1}^n\|_{\lambda_*}< +\infty $. Using again the property enjoyed by $\lambda_*$ by assumption, we get  $(\phi(x_j))_{j=1}^\infty \in \lambda_*$, that is, $(x_j)_{j=1}^\infty \in \lambda_*^w(E)$. The same property also gives
	\begin{align*}
		\|(x_j)_{j=1}^\infty\|_{w,\lambda_*}&=\sup_{\varphi \in B_{E^*}}\|(\varphi(x_j))_{j=1}^\infty\|_{\lambda_*}= \sup_{\varphi \in B_{E^*}} \sup_{n\in\mathbb{N}}\|(\varphi(x_j))_{j=1}^n\|_{\lambda_*}\\
		&\stackrel{\small(\ref{3fyc})}{\leq} \sup_{\varphi \in B_{E^*}} \sup_{n\in\mathbb{N}} \|\varphi\|\cdot  \|(x_j)_{j=1}^n\|_{w,\lambda_*}=  \sup_{n\in\mathbb{N}}   \|(x_j)_{j=1}^n\|_{w,\lambda_*},
	\end{align*}
proving that the sequence class $\lambda_*^w$ is finitely determined.
\end{proof}

We get from the proposition above a large family of ideal-representable sequence classes:

\begin{example}\label{lm9w}\rm Let $M$ be an Orlicz function with $M(1)= 1$. Taking $\lambda = h_M$, from Example \ref{exec} and Proposition \ref{C4S1P2} we get that the rule
$$E \mapsto \ell_{M^*}^w(E):=\left\{(x_j)_{j=1}^\infty  \in E^{\mathbb{N}} : (\varphi(x_j))_{j=1}^\infty \in \ell_{M^*}  {\rm ~for~every~}\varphi\in E^* \right\}$$
is a $(\mathcal{L}, h_M)$-representable sequence class, that is, $ \mathcal{L}(h_M,\cdot) \approx \ell_{M^*}^w$. In particular, this sequence class is linearly stable.
\end{example}

 Next we introduce the operator ideal that is universal for the representation of sequence classes, meaning that if a class is ideal-representable, then it is represented by this ideal.

 \begin{proposition} \label{C4S1P3}
	Let $\lambda$ be a scalar sequence space and let $X$ be a linearly stable sequence class with $X(\mathbb{K})\stackrel{1}{=}\lambda_*$. Then $\Pi_{\lambda_*^w;X}$ is a Banach operator ideal and, for every Banach space $F$, the operator
	$$ \Psi \colon \Pi_{\lambda_*^w;X}(\lambda,F) \longrightarrow X(F)~,~\Psi(u)=(u(e_j))_{j=1}^\infty,$$
is well defined, linear, injective and continuous with $\|\Psi\|\leq 1$.
 \end{proposition}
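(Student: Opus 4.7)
The plan is to assemble this statement from facts already proved in the paper, with almost no new computation.

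First I would verify that $\Pi_{\lambda_*^w;X}$ really is a Banach operator ideal by checking the two hypotheses recalled in Section~2. Linear stability of $X$ is part of the hypothesis, and linear stability of $\lambda_*^w$ was established in Proposition \ref{C4S1P2}(iv). For the second condition, I would combine the identification $\lambda_*^w(\mathbb{K}) \stackrel{1}{=} \lambda_*$ from Proposition \ref{C4S1P2}(iv) with the hypothesis $X(\mathbb{K}) \stackrel{1}{=} \lambda_*$, which gives $\lambda_*^w(\mathbb{K}) \stackrel{1}{=} X(\mathbb{K})$; in particular the scalar inclusion holds isometrically. Hence the theorem from \cite{botelho+campos} quoted in Section~2 applies and $\Pi_{\lambda_*^w;X}$ is a Banach operator ideal.

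Next, for the key observation: taking $E = \lambda$ and $u = {\rm id}_\lambda$ in Proposition \ref{C4S1P2}(iii) yields that the canonical sequence $(e_j)_{j=1}^\infty$ belongs to $\lambda_*^w(\lambda)$ with $\|(e_j)_{j=1}^\infty\|_{w,\lambda_*} = \|{\rm id}_\lambda\| = 1$. Now for any $u \in \Pi_{\lambda_*^w;X}(\lambda, F)$, the definition of $(\lambda_*^w;X)$-summing operators gives
\[
\Psi(u) = (u(e_j))_{j=1}^\infty = \widehat{u}\bigl((e_j)_{j=1}^\infty\bigr) \in X(F),
\]
so $\Psi$ is well defined. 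Linearity is immediate from the linearity of $u \mapsto u(e_j)$ for each $j$.

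For continuity together with the bound $\|\Psi\| \le 1$, I would estimate
\[
\|\Psi(u)\|_{X(F)} = \bigl\|\widehat{u}\bigl((e_j)_{j=1}^\infty\bigr)\bigr\|_{X(F)} \le \|\widehat{u}\|\cdot \|(e_j)_{j=1}^\infty\|_{w,\lambda_*} = \|u\|_{\Pi_{\lambda_*^w;X}} \cdot 1.
\]
Finally, injectivity is automatic: since every operator in a Banach operator ideal is bounded and $(e_j)_{j=1}^\infty$ is a Schauder basis for $\lambda$ by the definition of a scalar sequence space, $\Psi(u) = 0$ forces $u(e_j) = 0$ for every $j$ and hence $u = 0$.

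There is no real obstacle here — the statement is essentially a packaging of Proposition \ref{C4S1P2} together with the definition of $(X;Y)$-summing operators. The only point requiring a moment of care is checking the second bulleted hypothesis for $\Pi_{\lambda_*^w;X}$ to be an ideal, which is why the identification $\lambda_*^w(\mathbb{K}) \stackrel{1}{=} \lambda_*$ from Proposition \ref{C4S1P2}(iv) is indispensable. Note also that surjectivity of $\Psi$ is \emph{not} claimed in this proposition; that will presumably be the content of a subsequent characterization theorem.
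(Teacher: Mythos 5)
Your proposal is correct and follows essentially the same route as the paper: verify the two bulleted conditions from Section~2 using $\lambda_*^w(\mathbb{K})\stackrel{1}{=}\lambda_*\stackrel{1}{=}X(\mathbb{K})$, note that $(e_j)_{j=1}^\infty\in\lambda_*^w(\lambda)$ with $\|(e_j)_{j=1}^\infty\|_{w,\lambda_*}=1$, and read off well-definedness and the bound $\|\Psi\|\leq 1$ from the definition of $(\lambda_*^w;X)$-summing operators. The only cosmetic difference is that you obtain $(e_j)_{j=1}^\infty\in\lambda_*^w(\lambda)$ by applying Proposition \ref{C4S1P2}(iii) to ${\rm id}_\lambda$ while the paper checks it directly from the definition of $\lambda_*$, and you spell out the injectivity argument that the paper declares clear; both are fine.
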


 \begin{proof} The class $X$ is linearly stable by assumption and the $\lambda_*^w$ is linearly stable with $\lambda_*^w(\mathbb{K})\stackrel{1}{=} \lambda_*\stackrel{1}{=}X(\mathbb{K})$ by Proposition \ref{C4S1P2}(iv). Therefore, $\Pi_{\lambda_*^w;X}$ is a  Banach operator ideal (see Section 2).  Let $u\in \Pi_{\lambda_*^w;X}(\lambda,F)$ be given. For any $\varphi \in \lambda^*$, $(\varphi(e_j))_{j=1}^\infty \in \lambda_*$ by the definition of $\lambda_*$, hence $(e_j)_{j=1}^\infty \in \lambda_*^w(\lambda)$ by the definition of $\lambda_*^w(\lambda)$. We have $(u(e_j))_{j=1}^\infty \in X(F)$ because $u$ is $(\lambda_*^w,X)$-summing, so $\Psi$ is well defined. The linearity and injectivity are clear. Moreover, using the definition of the norms $\|\cdot\|_{w,\lambda_*}$ and $\|\cdot\|_{\lambda_*}, $
	\begin{align*}
		\|\Psi(u)\|_{X(F)}&=\|(u(e_j))_{j=1}^\infty\|_{X(F)}\leq \|u\|_{\lambda_*^w; X} \cdot \|(e_j)_{j=1}^\infty\|_{w,\lambda_*} \\ &=  \|u\|_{\lambda_*^w; X} \cdot \sup_{\varphi \in B_{\lambda^*}} \|(\varphi(e_j))_{j=1}^\infty\|_{\lambda_*}
		= \|u\|_{\lambda_*^w; X}\cdot \sup_{\varphi \in B_{\lambda^*}} \|\varphi\| = \|u\|_{\lambda_*^w; X},
	\end{align*}
	which proves that $\|\Psi\|\leq 1$.	
\end{proof}

Recall that $\cal N$ is the ideal of nuclear operators. The proofs of the representation $ \mathcal{N}(c_0,\cdot) \approx \ell_1(\cdot)$ we are aware of depend on topological tensor products techniques. As a first application of Proposition \ref{C4S1P3}, we give a simple tensor product-free proof of this fact. It is worth remarking that $\ell_1(\cdot) \stackrel{1}{=} \ell_1 \langle \cdot \rangle$ \cite[Corollary 3.9]{fourie+rontgen}. As usual, for $1 \leq p < \infty$, the ideal of absolutely $p$-summing operators is denoted by $\Pi_p$, that is, $\Pi_p = \Pi_{\ell_p^w; \ell_p(\cdot)}$.

\begin{proposition} \label{i9vq} The class $\ell_1(\cdot)$ is $\cal N$-representable.
\end{proposition}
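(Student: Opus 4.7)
The plan is to apply Proposition \ref{C4S1P3} with $\lambda = c_0$ and $X = \ell_1(\cdot)$. Example \ref{exec} gives $\lambda_{*} = \ell_1 \stackrel{1}{=} X(\mathbb{K})$, and since $\ell_1(\cdot)$ is linearly stable, the proposition delivers, for each Banach space $E$, a well defined, linear, injective and contractive operator
$$\Psi\colon \Pi_{\ell_1^w;\ell_1(\cdot)}(c_0,E) \longrightarrow \ell_1(E)~,~\Psi(u) = (u(e_j))_{j=1}^\infty.$$
Recalling from Section 2 that $\Pi_{\ell_1^w;\ell_1(\cdot)} = \Pi_1$, this is exactly the estimate $\|(u(e_j))_{j=1}^\infty\|_{\ell_1(E)} \leq \|u\|_{\Pi_1}$ for every $u \in \Pi_1(c_0,E)$.

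The key move is then to invoke the classical continuous inclusion $\mathcal{N}(c_0,E) \hookrightarrow \Pi_1(c_0,E)$ with $\|u\|_{\Pi_1} \leq \|u\|_{\mathcal{N}}$ (standard, see, e.g., \cite{df}). Restricting $\Psi$ to nuclear operators immediately upgrades the above to saying that $\Psi\colon \mathcal{N}(c_0,E) \to \ell_1(E)$ is well defined, linear, injective and contractive. This is precisely the ``hard'' half of the desired isometric isomorphism, and crucially it is obtained without any recourse to topological tensor products; that is the whole point of the application.

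To finish I would produce an isometric inverse by an explicit construction. Given $(x_j)_{j=1}^\infty \in \ell_1(E)$, define $u\colon c_0 \to E$ by $u((\alpha_j)_{j=1}^\infty) = \sum_{j=1}^\infty \alpha_j x_j$; absolute convergence is immediate from $|\alpha_j| \leq \|(\alpha_j)\|_\infty$ and $\sum \|x_j\| < \infty$. The representation $u = \sum_{j=1}^\infty e_j^{*} \otimes x_j$, where $e_j^{*} \in c_0^{*}$ is the $j$-th coordinate functional of norm $1$, exhibits $u$ as nuclear with $\|u\|_{\mathcal{N}} \leq \sum_{j=1}^\infty \|x_j\| = \|(x_j)_{j=1}^\infty\|_{\ell_1(E)}$, while a direct computation gives $\Psi(u) = (x_j)_{j=1}^\infty$. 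Combining this upper bound with the contractivity of $\Psi$ obtained above forces
$$\|(x_j)_{j=1}^\infty\|_{\ell_1(E)} = \|\Psi(u)\|_{\ell_1(E)} \leq \|u\|_{\mathcal{N}} \leq \|(x_j)_{j=1}^\infty\|_{\ell_1(E)},$$
so equality holds throughout, which simultaneously delivers surjectivity and the isometric property.

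I do not anticipate any real obstacle here; the architectural point is that Proposition \ref{C4S1P3} together with the elementary ideal inclusion $\mathcal{N} \subseteq \Pi_1$ does precisely the job that the tensor-product description of $\mathcal{N}(c_0,E)$ performs in the classical proof. The only external ingredient is the standard inequality $\|\cdot\|_{\Pi_1} \leq \|\cdot\|_{\mathcal{N}}$ on nuclear operators, which is routine.
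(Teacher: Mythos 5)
Your proof is correct, and its skeleton coincides with the paper's: both invoke Proposition \ref{C4S1P3} with $\lambda=c_0$, $X=\ell_1(\cdot)$ to get that $\Psi$ is well defined, injective and contractive on $\Pi_1(c_0,E)=\Pi_{\ell_1^w;\ell_1(\cdot)}(c_0,E)$, and both produce the inverse by the same explicit nuclear operator $u=\sum_j e_j^*\otimes x_j$ with $\|u\|_{\mathcal N}\le\sum_j\|x_j\|$. The one genuine difference is the external ingredient: the paper passes through the isometric coincidence $\mathcal{N}(c_0,F)\stackrel{1}{=}\Pi_1(c_0,F)$ (citing \cite[Ex.\,11.2, p.\,142]{df}) and then only needs $\Psi$ to be a surjective isometry on $\Pi_1(c_0,F)$, whereas you use only the elementary contractive inclusion $\mathcal{N}\subseteq\Pi_1$, $\|\cdot\|_{\Pi_1}\le\|\cdot\|_{\mathcal N}$, and run the squeeze $\|(x_j)_{j=1}^\infty\|_1=\|\Psi(u)\|_1\le\|u\|_{\Pi_1}\le\|u\|_{\mathcal N}\le\|(x_j)_{j=1}^\infty\|_1$ directly on nuclear operators. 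This is a lighter hypothesis and arguably fits the paper's stated aim of a self-contained, tensor-product-free argument even better; indeed, combined with the injectivity of $\Psi$ on $\Pi_1(c_0,E)$ (which forces every $v\in\Pi_1(c_0,E)$ to coincide with the nuclear operator built from $(v(e_j))_{j=1}^\infty$), your argument actually re-proves the coincidence $\mathcal{N}(c_0,E)\stackrel{1}{=}\Pi_1(c_0,E)$ as a byproduct rather than assuming it. The only point worth making explicit is that the final display gives the isometry on all of $\mathcal{N}(c_0,E)$, and not merely on the constructed preimages, precisely because injectivity identifies an arbitrary nuclear $v$ with the operator built from $(v(e_j))_{j=1}^\infty$; you have all the pieces for this, so it is a matter of one added sentence, not a gap.
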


\begin{proof} Given a Banach space $F$, by Proposition \ref{C4S1P3} we just have to show that the operator
	$ \Psi \colon \Pi_1(c_0,F) \longrightarrow \ell_1(F)$, $\Psi(u)=(u(e_j))_{j=1}^\infty$, is a surjective isometry and that ${\cal N}(c_0,F) \stackrel{1}{=}\Pi_1(c_0,F)$. For the latter isometric isomorphism, see \cite[Ex.\,11.2, p.\,142]{df}. To see the surjectivity, given $(x_j)_{j=1}^\infty \in \ell_1(F)$, consider the biorthogonal functionals $(e_j^*)_{j=1}^\infty$ associated to the Schauder basis $(e_j)_{j=1}^\infty$ of $c_0$. From $\sum\limits_{j=1}^\infty \|e_j^*\|\cdot\|x_j\| = \sum\limits_{j=1}^\infty \|x_j\| < \infty$ it follows that $u \colon c_0 \longrightarrow F$ given by $u\left(y \right) = \sum\limits_{n=1}^\infty e_n^*(y)x_n,$
is a nuclear operator. Since  $u(e_j) = \sum\limits_{n=1}^\infty e_n^*(e_j)x_n = x_j$ for every $j$, we have $\Psi(u) = (u(e_j))_{j=1}^\infty = (x_j)_{j=1}^\infty $. The inequality $\|u\|_{\cal N} \leq \|(x_j)_{j=1}^\infty\|_{1}$ is obvious and the reverse inequality follows from Proposition \ref{C4S1P3}.
\end{proof}

  The proof above shows that the class $\ell_1(\cdot)$ is ${\cal N}$-representable and $\Pi_1$-representable. Since these two ideals are different, this is an example of the non uniqueness of the ideal that represents an ideal-representable class.

In the next result we prove two useful characterizations of ideal-representable classes and we exhibit a concrete ideal which represents a representable class. From Proposition \ref{p9me} we know that the study of ideal-representable classes can be restricted to linearly stable classes $X$ for which there is a scalar sequence space $\lambda$ so that $X(\mathbb{K})\stackrel{1}{=}\lambda_*$.

\begin{theorem} \label{C4S1T1} Let $X$ be a linearly stable sequence class and let $\lambda$ be a scalar sequence space such that $X(\mathbb{K})\stackrel{1}{=}\lambda_*$. The following are equivalent:\\
\noindent{\rm (i)} $X$ is ideal-representable.\\
\noindent{\rm (ii)}	
If $E$ is a Banach space, $v\in \mathcal{L}(\lambda,\lambda)$ and $u\in \mathcal{L}(\lambda,E)$ is so that $(u(e_j))_{j=1}^\infty\in X(E)$, then $$(u\circ v(e_j))_{j=1}^\infty\in X(E) \mbox{~~and~~}\|(u\circ v(e_j))_{j=1}^\infty\|_{X(E)}\leq \|v\|\cdot \|(u(e_j))_{j=1}^\infty\|_{X(E)}.$$
\noindent{\rm (iii)} 
 If $E$ is a Banach space, $(x_j)_{j=1}^\infty \in X(E)$ and  $((\alpha_{j,i})_{i=1}^\infty)_{j=1}^\infty \in \lambda_*^w(\lambda)$, then  		$ \left( \sum\limits_{i=1}^\infty \alpha_{j,i} x_i\right)_{j=1}^\infty  \in X(E)$ and
		$$\left\| \left( \sum_{i=1}^\infty \alpha_{j,i} x_i\right)_{j=1}^\infty\right\|_{X(E)} \leq \left\| ((\alpha_{j,i})_{i=1}^\infty)_{j=1}^\infty\right\|_{w,\lambda_*} \cdot \|(x_j)_{j=1}^\infty\|_{X(E)}.$$
\noindent{\rm (iv)}  $X$ is $\Pi_{\lambda_*^w;X}$-representable.
\end{theorem}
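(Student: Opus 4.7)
The plan is to prove the four implications in a cycle $(iv) \Rightarrow (i) \Rightarrow (ii) \Rightarrow (iii) \Rightarrow (iv)$, using Propositions \ref{C4S1P2} and \ref{C4S1P3} as the main workhorses so that each step reduces to a short calculation.

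The implication $(iv) \Rightarrow (i)$ is immediate since $\Pi_{\lambda_*^w;X}$ is a Banach operator ideal by Proposition \ref{C4S1P3}. For $(i) \Rightarrow (ii)$, suppose $X$ is $(\mathcal{I},\lambda)$-representable via the isometric isomorphism $\Psi$. Given $u \in \mathcal{L}(\lambda,E)$ with $(u(e_j))_{j=1}^\infty \in X(E)$, the surjectivity of $\Psi$ yields $\tilde u \in \mathcal{I}(\lambda,E)$ with $\tilde u(e_j)=u(e_j)$ for all $j$; since $(e_j)_{j=1}^\infty$ is a Schauder basis of $\lambda$, by continuity $u=\tilde u$, so $u \in \mathcal{I}(\lambda,E)$. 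The ideal property gives $u\circ v \in \mathcal{I}(\lambda,E)$ with $\|u\circ v\|_{\mathcal{I}} \leq \|u\|_{\mathcal{I}} \cdot \|v\|$, and translating back through $\Psi$ yields exactly the inequality required in (ii).

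For $(ii) \Rightarrow (iii)$, the idea is to manufacture the operators $u$ and $v$ from the sequences $(x_j)_{j=1}^\infty$ and $((\alpha_{j,i})_{i=1}^\infty)_{j=1}^\infty$ by invoking Proposition \ref{C4S1P2}(iii) and \ref{C4S1P2}(v). Namely, since $X$ is linearly stable with $X(\mathbb{K}) \stackrel{1}{=} \lambda_*$, Proposition \ref{C4S1P2}(v) gives $X \stackrel{1}{\hookrightarrow} \lambda_*^w$, so $(x_j)_{j=1}^\infty \in \lambda_*^w(E)$ and Proposition \ref{C4S1P2}(iii) furnishes $u \in \mathcal{L}(\lambda,E)$ with $u(e_j)=x_j$ and $\|u\| = \|(x_j)_{j=1}^\infty\|_{w,\lambda_*} \leq \|(x_j)_{j=1}^\infty\|_{X(E)}$. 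Applying Proposition \ref{C4S1P2}(iii) again for $E=\lambda$ produces $v \in \mathcal{L}(\lambda,\lambda)$ with $v(e_j) = (\alpha_{j,i})_{i=1}^\infty$ and $\|v\| = \|((\alpha_{j,i})_{i=1}^\infty)_{j=1}^\infty\|_{w,\lambda_*}$. A continuity-plus-Schauder-basis argument shows $u\circ v(e_j) = \sum_{i=1}^\infty \alpha_{j,i} x_i$, and then (ii) delivers both the membership in $X(E)$ and the desired norm estimate.

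For $(iii) \Rightarrow (iv)$, Proposition \ref{C4S1P3} already gives that $\Psi\colon \Pi_{\lambda_*^w;X}(\lambda,E) \longrightarrow X(E)$ is well defined, linear, injective, and has $\|\Psi\|\leq 1$, so the only remaining task is surjectivity together with the reverse norm inequality. Given $(x_j)_{j=1}^\infty \in X(E)$, the same construction as above produces $u \in \mathcal{L}(\lambda,E)$ with $u(e_j)=x_j$. For an arbitrary $((\alpha_{j,i})_{i=1}^\infty)_{j=1}^\infty \in \lambda_*^w(\lambda)$, continuity of $u$ and the Schauder basis expansion give $u((\alpha_{j,i})_{i=1}^\infty) = \sum_{i=1}^\infty \alpha_{j,i}x_i$, so (iii) tells us precisely that $\widehat u$ maps $\lambda_*^w(\lambda)$ into $X(E)$ with $\|\widehat u\| \leq \|(x_j)_{j=1}^\infty\|_{X(E)}$, i.e.\ $u \in \Pi_{\lambda_*^w;X}(\lambda,E)$ and $\|u\|_{\Pi_{\lambda_*^w;X}} \leq \|\Psi(u)\|_{X(E)}$. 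Combining this with $\|\Psi\|\leq 1$ completes the isometry. The main obstacle I anticipate is the bookkeeping in $(ii) \Rightarrow (iii)$: one must carefully justify swapping summation and the operator $u$ when evaluating $u\circ v(e_j)$, which is the point where the Schauder basis hypothesis and the continuity of $u$ and $v$ must be used in tandem.
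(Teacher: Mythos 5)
Your proposal is correct and follows essentially the same route as the paper: both rest on Proposition \ref{C4S1P2}(iii) and (v) to lift sequences to operators and on Proposition \ref{C4S1P3} for the easy half of (iv), with the only cosmetic difference being that you close the cycle via (iii)$\Rightarrow$(iv) where the paper derives (iv) directly from (ii) (interchangeable choices, since (ii)$\Leftrightarrow$(iii) is established either way). The step you flagged as the main obstacle, $u\circ v(e_j)=\sum_{i}\alpha_{j,i}x_i$, is handled in the paper exactly as you describe, via the Schauder basis expansion and continuity of $u$.
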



\begin{proof} Let $E$ be an arbitrary Banach space.

	(i)$\Rightarrow$ (ii) Suppose that $X$ is  $(\mathcal{I},\lambda)$-representable. 
Let $v\in \mathcal{L}(\lambda,\lambda)$ and  $u\in\mathcal{L}(\lambda,E)$ with $(u(e_j))_{j=1}^\infty \in X(E)$ be given. Since $\mathcal{I}(\lambda,\cdot)\approx X$ and $(u(e_j))_{j=1}^\infty \in X(E)$, we have $u\in \mathcal{I}(\lambda,E)$. From the ideal property we get $ u\circ v\in \mathcal{I}(\lambda,E)$.
	Using again the representation  $\mathcal{I}(\lambda,\cdot)\approx X$, it follows that $(u\circ v(e_j))_{j=1}^\infty \in X(E)$ and
	\begin{align*}
		\|(u\circ v(e_j))_{j=1}^\infty\|_{X(E)}&=\|u\circ v\|_{\mathcal{I}}
		\leq \|v\|\cdot \|u\|_{\mathcal{I}}
		=  \|v\|\cdot  \|(u(e_j))_{j=1}^\infty\|_{X(E)}.
	\end{align*}
	
	(ii) $\Rightarrow$ (iii) and (ii) $\Rightarrow$ (iv) 
Let $(x_j)_{j=1}^\infty \in X(E)$ and $((\alpha_{j,i})_{i=1}^\infty)_{j=1}^\infty \in \lambda_*^w(\lambda)$ be given. Since $X$ is linearly stable and  $X(\mathbb{K})\stackrel{1}{=}\lambda_*$, Proposition \ref{C4S1P2}(v) gives $X(E)\stackrel{1}{\hookrightarrow} \lambda_*^w(E)$. Applying item (iii) of the same proposition for $E$ and for $\lambda$, there are $u\in \mathcal{L}(\lambda,E)$ and $v\in \mathcal{L}(\lambda,\lambda)$ such that $(u(e_j))_{j=1}^\infty=(x_j)_{j=1}^\infty $ and  $(v(e_j))_{j=1}^\infty=((\alpha_{j,i})_{i=1}^\infty)_{j=1}^\infty$. Thus,
	\begin{align*}
		(u\circ v(e_j))_{j=1}^\infty&=\left(  u\left(v(e_j)\right)  \right)_{j=1}^\infty=\left(  u\left((\alpha_{j,i})_{i=1}^\infty\right)  \right)_{j=1}^\infty=  \left(u\left( \sum_{i=1}^\infty \alpha_{j,i} e_i\right)\right)_{j=1}^\infty \\& =  \left( \sum_{i=1}^\infty \alpha_{j,i} u(e_i)\right)_{j=1}^\infty  		
		=\left( \sum_{i=1}^\infty \alpha_{j,i} x_i\right)_{j=1}^\infty.
	\end{align*}
By assumption, (ii) holds, so 
	$\left( \sum\limits_{i=1}^\infty \alpha_{j,i} x_i\right)_{j=1}^\infty=(u\circ v(e_j))_{j=1}^\infty \in X(E)$ and
		\begin{align*}
		\left\| \left( \sum_{i=1}^\infty \alpha_{j,i} x_i\right)_{j=1}^\infty\right\|_{X(E)}&=\|(u\circ v(e_j))_{j=1}^\infty \|_{X(E)}\leq  \|v\|\cdot \|(u(e_j))_{j=1}^\infty\|_{X(E)}\\&= \left\| ((\alpha_{j,i})_{i=1}^\infty)_{j=1}^\infty\right\|_{w,\lambda_*} \cdot \|(x_j)_{j=1}^\infty\|_{X(E)}.
	\end{align*}
	This proves (iii).	By Proposition \ref{C4S1P3} we know that $\Pi_{\lambda_*^w;X}$ is a Banach operator ideal and that the operator
	$ \Psi\colon \Pi_{\lambda_*^w;X}( \lambda, E) \longrightarrow X(E)$ 
is linear injective with
$ \|\Psi(u)\|=\|(u(e_j))_{j=1}^\infty\|_{X(E)}\leq \|u\|_{\lambda_*^w;X}$.
	We also have $(x_j)_{j=1}^\infty \in X(E) \stackrel{1}{\hookrightarrow} \lambda_*^w(E)$ and $(u(e_j))_{j=1}^\infty=(x_j)_{j=1}^\infty$ for some $u \in \mathcal{L}(\lambda,E)$. To show that $u\in \Pi_{\lambda_*^w;X}(\lambda,E)$, let   $(\alpha_j)_{j=1}^\infty \in \lambda_*^w(\lambda)$ be given. Calling on Proposition \ref{C4S1P2} once again, there is $w \in \mathcal{L}(\lambda,\lambda)$ such that $(w(e_j))_{j=1}^\infty=(\alpha_j)_{j=1}^\infty$. And applying again assumption (ii), we get 
	$(u(\alpha_j))_{j=1}^\infty=(u\circ w(e_j))_{j=1}^\infty \in X(E), $
	which implies $u\in \Pi_{\lambda_*^w;X}(\lambda,E)$, and
	\begin{align*}
		\|(u(\alpha_j))_{j=1}^\infty\|_{X(E)}&= \|(u\circ w(e_j))_{j=1}^\infty\|_{X(E)}\leq \|w\|\! \cdot \! \|(u(e_j))_{j=1}^\infty\|_{X(E)}= \|(\alpha_j)_{j=1}^\infty\|_{w,\lambda_*}\!\cdot\! \|(x_j)_{j=1}^\infty\|_{X(E)}.
	\end{align*}
Taking the supremum over $(\alpha_j)_{j=1}^\infty \in B_{\lambda_*^w(\lambda)}$, it follows that $$\|u\|_{\lambda_*^w;X}\leq \|(x_j)_{j=1}^\infty\|_{X(E)}=\|(u(e_j))_{j=1}^\infty\|_{X(E)}.$$
It is proved that $\Psi$ is an isometric isomorphism, regardless of the Banach space $E$. This proves that 
$X$ is $\Pi_{\lambda_*^w;X}$-representable, which establishes (iv).
	
The implication	(iv)$\Rightarrow$ (i) is immediate, so it is enough to prove
	(iii) $\Rightarrow$ (ii) Assume (iii) and let $v\in \mathcal{L}(\lambda,\lambda)$ and $u\in \mathcal{L}(\lambda,E)$ with $(u(e_j))_{j=1}^\infty\in X(E)$ be given. By Proposition \ref{C4S1P2}(iii) we have  $(v(e_j))_{j=1}^\infty\in \lambda_*^w(\lambda) $ with $\| (v(e_j))_{j=1}^\infty\|_{w,\lambda_*}=\|v\|$. Writing  $(v(e_j))_{j=1}^\infty=((\alpha_{j,i})_{i=1}^\infty)_{j=1}^\infty$, by assumption we have
	\begin{align*}
		(u\circ v(e_j))_{j=1}^\infty&= \left(  u((\alpha_{j,i})_{i=1}^\infty)  \right)_{j=1}^\infty
		=\left( u\left(\sum_{i=1}^\infty \alpha_{j,i}e_i\right)  \right)_{j=1}^\infty
		=\left( \sum_{i=1}^\infty \alpha_{j,i} u(e_i)\right)_{j=1}^\infty \in X(E)
	\end{align*}
	and
	\begin{align*}
		\|(u\circ v(e_j))_{j=1}^\infty\|_{X(E)}&=\left\| \left( \sum_{i=1}^\infty \alpha_{j,i} u(e_i)\right)_{j=1}^\infty  \right\|_{X(E)}
		\leq \left\| ((\alpha_{j,i})_{i=1}^\infty)_{j=1}^\infty\right\|_{w,\lambda_*} \cdot \|(u(e_j))_{j=1}^\infty\|_{X(E)}\\
		&= \left\| (v(e_j))_{j=1}^\infty\right\|_{w,\lambda_*} \cdot \|(u(e_j))_{j=1}^\infty\|_{X(E)}
		= \| v\|\cdot \|(u(e_j))_{j=1}^\infty\|_{X(E)}.
	\end{align*} It is proved that (ii) holds.
\end{proof}

\begin{example}\label{C4S1E3}\rm
	Let  $1<p<\infty$ and let $M$ be an Orlicz function with $M(1)=1$. The following representations follow from the examples in the Introduction, Example \ref{lm9w} and Theorem \ref{C4S1T1}:\\
\noindent$\bullet$ $\Pi_{\ell_{M^*}^w;\ell_{M^*}^w}(h_{M}, \cdot) \approx\ell_{M^*}^w$, $\Pi_{\ell_p^w;\ell_p^w}(\ell_{p^*}, \cdot) \approx\ell_p^w$ and $\Pi_{\ell_1^w;\ell_1^w}(c_0, \cdot) \approx\ell_1^w$. These representations can be regarded as trivial because, due to the linear stability of the classes $\ell_{M^*}^w$, $\ell_p^w$ and $\ell_1^w$, it holds $\Pi_{\ell_{M^*}^w;\ell_{M^*}^w}=\Pi_{\ell_p^w;\ell_p^w}=\Pi_{\ell_1^w;\ell_1^w}=\mathcal{L}$. \\
\noindent$\bullet$ $\Pi_{\ell_p^w;\ell_p^u}(\ell_{p^*}, \cdot)  \approx\ell_p^u$, hence $\mathcal{K}(\ell_{p^*},\cdot)\stackrel{1}{=}\Pi_{\ell_p^w;\ell_p^u}(\ell_{p^*}, \cdot)$.\\
\noindent$\bullet$ $\Pi_{\ell_1^w;\ell_1^u}(c_0, \cdot) \approx\ell_1^u$, hence  $\mathcal{K}(c_0,\cdot)\stackrel{1}{=}\Pi_{\ell_1^w;\ell_1^u}(c_0, \cdot)$.\\
\noindent$\bullet$ $\Pi_{\ell_p^w;\ell_p\langle \cdot\rangle}(\ell_{p^*}, \cdot)  \approx\ell_p\langle \cdot\rangle$, hence  $\mathcal{N}(\ell_{p^*},\cdot)\stackrel{1}{=}\Pi_{\ell_p^w;\ell_p\langle \cdot\rangle}(\ell_{p^*}, \cdot)$.
\end{example}

The representations above are either known or expected. Our next purpose is to give new/unexpected consequences of Theorem \ref{C4S1T1}. 
Note that, since Rad, RAD and $\ell_p(\cdot)$, $1 < p < \infty$, are linearly stable and ${\rm Rad}(\mathbb{K}) = {\rm RAD}(\mathbb{K})= \ell_2 = \ell_2^*$ and $\ell_p(\mathbb{K}) = \ell_p = \ell_{p^*}^*$, we cannot use Proposition \ref{p9me}  to conclude that these classes are not ideal-representable. We will reach this conclusion by using Theorem \ref{C4S1T1}. 

\begin{corollary}\label{C4S1C1} The sequence class {\rm Rad} is not ideal representable.
\end{corollary}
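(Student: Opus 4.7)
The plan is to argue by contradiction via Theorem~\ref{C4S1T1}. Suppose $\mathrm{Rad}$ is $(\mathcal{I},\lambda)$-representable for some Banach operator ideal $\mathcal{I}$ and scalar sequence space $\lambda$. The first step is to pin down $\lambda$: Proposition~\ref{p9me} gives $\lambda_*\stackrel{1}{=}\mathrm{Rad}(\mathbb{K})$, and since the Rademacher functions are orthonormal in $L_2[0,1]$ we have $\mathrm{Rad}(\mathbb{K})\stackrel{1}{=}\ell_2$. Combined with the identification $\lambda^*\stackrel{1}{=}\lambda_*$ from Proposition~\ref{C4S0P1}(a), the duality formula $\|x\|_\lambda=\sup_{\varphi\in B_{\lambda^*}}|\varphi(x)|$ and Cauchy--Schwarz give $\|x\|_\lambda=\|(x_i)\|_{\ell_2}$ for every $x=\sum_i x_i e_i\in\lambda$, so the density of $c_{00}$ in $\ell_2$ forces $\lambda\stackrel{1}{=}\ell_2$.

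Next I would invoke the implication (i)$\Rightarrow$(ii) of Theorem~\ref{C4S1T1}: ideal-representability forces that for every Banach space $E$, every $v\in\mathcal{L}(\ell_2)$, and every $u\in\mathcal{L}(\ell_2,E)$ with $(u(e_j))\in\mathrm{Rad}(E)$, the inequality $\|(u\circ v(e_j))\|_{\mathrm{Rad}(E)}\leq\|v\|\cdot\|(u(e_j))\|_{\mathrm{Rad}(E)}$ must hold. I will contradict this by working in the two-dimensional $\ell_1$-space $\ell_1^2$ and using a Hadamard rotation acting only on the first two basis vectors.

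Concretely, I would define $v\in\mathcal{L}(\ell_2)$ and $u\in\mathcal{L}(\ell_2,\ell_1^2)$ by the same formulas on their first two basis vectors: $v(e_1)=u(e_1)=(e_1+e_2)/\sqrt{2}$, $v(e_2)=u(e_2)=(e_1-e_2)/\sqrt{2}$, and $v(e_j)=u(e_j)=0$ for $j\geq 3$ (the $e_1,e_2$ on the right-hand side of the definition of $u$ being the canonical basis of $\ell_1^2$). Then $v$ is a partial isometry, $\|v\|=1$, and both $(u(e_j))$ and $(u\circ v(e_j))$ lie in $c_{00}(\ell_1^2)\subseteq\mathrm{Rad}(\ell_1^2)$. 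Using $|r_j(t)|=1$ almost everywhere, a direct computation gives $\|(u(e_j))\|_{\mathrm{Rad}(\ell_1^2)}=\sqrt{2}$, while $u\circ v$ collapses back to the canonical basis ($u\circ v(e_j)=e_j$ for $j=1,2$), yielding $\|(u\circ v(e_j))\|_{\mathrm{Rad}(\ell_1^2)}=\|r_1e_1+r_2e_2\|_{L_2(\ell_1^2)}=2$. Since $2>\sqrt{2}=\|v\|\cdot\|(u(e_j))\|_{\mathrm{Rad}(\ell_1^2)}$, the required inequality fails.

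The main obstacle is locating a counterexample at all. Kahane's contraction principle validates the bound for any diagonal $v$, and on Hilbert targets $E$ one has $\mathrm{Rad}(E)\stackrel{1}{=}\ell_2(E)$ so every bounded $v$ on $\ell_2$ acts with the expected norm; both of these families of examples must therefore be avoided. The key insight is that in a non-Hilbertian space such as $\ell_1^2$ the Rademacher $L_2$-norm of a sum depends on the specific directions of the summands, so an orthogonal change of basis in $\ell_2$ transported through $u$ can genuinely inflate the $\mathrm{Rad}$-norm.
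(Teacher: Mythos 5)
Your proof is correct, and it takes a genuinely different route from the paper's. The paper argues via part (iv) of Theorem \ref{C4S1T1}: it imports the result of Tien and Vidal-V\'azquez \cite{ricardo+tien} to produce, over a space $E$ without finite cotype (e.g.\ $E=c_0$), an operator $u\in\mathcal{L}(\ell_2,E)$ with $(u(e_j))_{j=1}^\infty\in\mathrm{Rad}(E)$ but $u\notin\Pi_{\ell_2^w;\mathrm{Rad}}(\ell_2,E)$, which contradicts the surjectivity of $\Psi$ for the universal ideal $\Pi_{\ell_2^w;\mathrm{Rad}}$. You instead contradict the isometric inequality in condition (ii) of the same theorem by an explicit rank-two example: your computations are right ($|a+b|+|a-b|=2\max(|a|,|b|)$ gives $\|(u(e_j))_j\|_{\mathrm{Rad}(\ell_1^2)}=\sqrt2$ while $\|(u\circ v(e_j))_j\|_{\mathrm{Rad}(\ell_1^2)}=2$ and $\|v\|=1$), and your preliminary step pinning down $\lambda\stackrel{1}{=}\ell_2$ from $\lambda_*\stackrel{1}{=}\ell_2$ is a welcome precaution, since statement (i) of the theorem quantifies over all representing pairs $(\mathcal{I},\lambda)$. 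The trade-off: the paper's argument is computation-free but leans on a nontrivial cotype-theoretic result, and it proves something strictly stronger --- the canonical map $\Psi$ is not even surjective, so the obstruction is not merely a normalization issue; your argument is entirely elementary, self-contained and finite-dimensional, but as written it only defeats the \emph{isometric} requirement by a factor $\sqrt2$ (one would need to iterate with $2^n\times2^n$ Hadamard matrices in $\ell_1^{2^n}$ to make the defect unbounded and thereby exclude representations that are isomorphisms with uniformly bounded distortion). For the corollary as stated, which uses the isometric definition of ideal-representability, your proof is complete.
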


\begin{proof} Let $E$ be a Banach space with no finite cotype, for instance, $E = c_0$ (for the theory of cotype of Banach spaces, see \cite[Chapter 11]{diestel+jarchow+tonge}). Applying \cite[Proposition 2]{ricardo+tien} for the orthonormal basis $(e_j)_{j=1}^\infty$ of the Hilbert space $\ell_2$, 
there is an operator $u \in {\cal L}(\ell_2, E)$ for which the following equivalence does not hold:
	\begin{equation*}\label{pm9q} (u(e_j))_{j=1}^\infty \in {\rm Rad}(E)   \Longleftrightarrow u \in \Pi_{\ell_2^w;{\rm Rad}}(\ell_2,E).  \end{equation*}
Since $(e_j)_{j=1}^\infty \in \ell_2^w(\ell_2)$, the implication $(\Longleftarrow)$ above holds for any operator in $ {\cal L}(\ell_2, E)$, so the implication $(\Longrightarrow)$ is the one that fails for $u$. Therefore,  $(u(e_j))_{j=1}^\infty \in {\rm Rad}(E)$ and $u \notin \Pi_{\ell_2^w;{\rm Rad}}(\ell_2,E)$. Assume that Rad is ideal representable. As ${\rm Rad}(\mathbb{K}) = \ell_2 = \ell_2^*$ and Rad is linearly stable, the ideal $\Pi_{\ell_2^w;\text{Rad}}$ represents Rad by Theorem \ref{C4S1T1}, hence the operator $\Psi \colon \Pi_{\ell_2^w;\text{Rad}}(\ell_{2},E) \longrightarrow \text{Rad}(E)$, $\Psi(v) = (v(e_j))_{j=1}^\infty$, is surjective.
Since $(u(e_j))_{j=1}^\infty \in {\rm Rad}(E)$, there is $v \in \Pi_{\ell_2^w;{\rm Rad}}(\ell_2,E)$ such that 
	$$(u(e_j))_{j=1}^\infty = \Psi(v) = (v(e_j))_{j=1}^\infty. $$
So, $u(e_j) = v(e_j)$ for every $j$, from which it follows that $u = v  \in \Pi_{\ell_2^w;{\rm Rad}}(\ell_2,E)$ because $(e_j)_{j=1}^\infty$ is a Schauder basis for $\ell_2$. This contradiction proves that Rad is not ideal-representable. 
\end{proof}

We shall settle the case of RAD using the following lemma, simple but useful. The proof is omitted.

\begin{lemma}\label{C4S1L1.1}
	Let $X$ and $Y$ be sequence classes and let $\lambda$ be a scalar sequence space such that $X(\mathbb{K})\stackrel{1}{=}Y(\mathbb{K})\stackrel{1}{=} \lambda_*$. If there is a Banach space $E$ such that $X(E)=Y(E)$  and the correspondence $u \in  \Pi_{\lambda_*^w;X}(\lambda,E) \mapsto  (u(e_j))_{j=1}^\infty \in X(E)$ is not surjective, then $Y$ is not ideal-representável.
\end{lemma}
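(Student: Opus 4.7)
The plan is a contradiction argument. Suppose for the sake of contradiction that $Y$ is ideal-representable. Then by Proposition \ref{p9me}, $Y$ is linearly stable. Since we already assume $Y(\mathbb{K}) \stackrel{1}{=} \lambda_*$, the hypotheses of Theorem \ref{C4S1T1} are satisfied for the pair $(Y, \lambda)$. The implication (i)$\Rightarrow$(iv) of that theorem then tells us that $Y$ is $\Pi_{\lambda_*^w; Y}$-representable. In particular, taking $F = E$, the operator
$$ v \in \Pi_{\lambda_*^w; Y}(\lambda, E) \longmapsto (v(e_j))_{j=1}^\infty \in Y(E) $$
is a surjective isometric isomorphism.

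Next I would observe that the assumption $X(E) = Y(E)$ forces the set equality $\Pi_{\lambda_*^w; X}(\lambda, E) = \Pi_{\lambda_*^w; Y}(\lambda, E)$: indeed, an operator $u \in \mathcal{L}(\lambda, E)$ belongs to either of these spaces precisely when $(u(\alpha_j))_{j=1}^\infty$ lies in $X(E) = Y(E)$ for every $(\alpha_j)_{j=1}^\infty \in \lambda_*^w(\lambda)$. Under this identification, the correspondence
$$ u \in \Pi_{\lambda_*^w; X}(\lambda, E) \longmapsto (u(e_j))_{j=1}^\infty \in X(E) $$
coincides with the surjective map from the previous paragraph, and so it is itself surjective. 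This contradicts the hypothesis, finishing the argument.

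There is really no obstacle here: the lemma is essentially a bookkeeping statement that converts the characterization (i)$\Leftrightarrow$(iv) of Theorem \ref{C4S1T1} into a practical non-representability test. The only subtle point to be careful about is that Theorem \ref{C4S1T1} is applied to $Y$ (with the fixed scalar sequence space $\lambda$), not to $X$, and that the linear stability of $Y$ required to invoke it is not assumed directly but is free of charge via Proposition \ref{p9me} once $Y$ is assumed ideal-representable.
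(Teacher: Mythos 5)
Your argument is correct: assuming $Y$ ideal-representable, Proposition \ref{p9me} gives its linear stability, Theorem \ref{C4S1T1} (i)$\Rightarrow$(iv) makes $u\mapsto (u(e_j))_{j=1}^\infty$ surjective onto $Y(E)$ from $\Pi_{\lambda_*^w;Y}(\lambda,E)$, and the set equalities $X(E)=Y(E)$ and hence $\Pi_{\lambda_*^w;X}(\lambda,E)=\Pi_{\lambda_*^w;Y}(\lambda,E)$ transfer that surjectivity to the map in the hypothesis, a contradiction. The paper omits the proof of this lemma, but this is plainly the intended argument, and your two flagged subtleties (applying the theorem to $Y$, and obtaining linear stability for free) are exactly the right points to watch.
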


\begin{corollary}
	The sequence class {\rm RAD} is not ideal-representable.
\end{corollary}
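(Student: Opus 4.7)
The approach is to apply Lemma \ref{C4S1L1.1} with $X = {\rm Rad}$, $Y = {\rm RAD}$, and $\lambda = \ell_2$. Since ${\rm Rad}(\mathbb{K}) \stackrel{1}{=} {\rm RAD}(\mathbb{K}) \stackrel{1}{=} \ell_2$ and $\ell_2 \stackrel{1}{=} (\ell_2)_*$, the scalar-component hypotheses of the lemma are automatic. What remains is to exhibit a single Banach space $E$ for which both ${\rm Rad}(E) = {\rm RAD}(E)$ holds and the correspondence $u \in \Pi_{\ell_2^w;{\rm Rad}}(\ell_2,E) \mapsto (u(e_j))_{j=1}^\infty \in {\rm Rad}(E)$ fails to be surjective.

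The first ingredient, the non-surjectivity, was already obtained inside the proof of Corollary \ref{C4S1C1} for any Banach space with no nontrivial cotype: \cite[Proposition 2]{ricardo+tien} produces $u \in \mathcal{L}(\ell_2,E)$ with $(u(e_j))_{j=1}^\infty \in {\rm Rad}(E)$ and $u \notin \Pi_{\ell_2^w;{\rm Rad}}(\ell_2,E)$, and since $(e_j)_{j=1}^\infty$ is a Schauder basis of $\ell_2$, no other operator can agree with $u$ on the basis. The second ingredient is the identity ${\rm Rad}(E) = {\rm RAD}(E)$, a classical consequence of the Ito--Nishio theorem for Rademacher series that holds whenever $E$ contains no isomorphic copy of $c_0$ (the $L_2$-norms of partial Rademacher sums are nondecreasing, so that any uniform bound forces $L_2$-convergence in such $E$).

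The main obstacle is the apparent tension between these two requirements: the example $E = c_0$ used in Corollary \ref{C4S1C1} obviously fails the second one. I would resolve this by choosing $E = \left(\bigoplus_{n=1}^\infty \ell_\infty^n\right)_{\ell_2}$, which is reflexive (hence contains no copy of $c_0$ and satisfies ${\rm Rad}(E) = {\rm RAD}(E)$) yet contains $\ell_\infty^n$ isometrically for every $n$ (hence has no nontrivial cotype, by considering Rademacher averages of the unit vectors of $\ell_\infty^n$). With such an $E$ in hand, Lemma \ref{C4S1L1.1} yields that ${\rm RAD}$ is not ideal-representable.
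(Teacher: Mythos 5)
Your proof is correct and follows essentially the same route as the paper: both apply Lemma \ref{C4S1L1.1} with $X={\rm Rad}$, $Y={\rm RAD}$, $\lambda=\ell_2$, reusing the non-surjectivity argument from Corollary \ref{C4S1C1} together with the fact that ${\rm Rad}(E)={\rm RAD}(E)$ when $E$ contains no copy of $c_0$, so that all that is needed is a reflexive space without finite cotype. The only difference is the choice of witness: the paper takes $E=\mathcal{L}(^n\ell_p)$ with $2\leq n<p<\infty$ (reflexive by \cite{alencar}, without finite cotype by \cite{botelho-1}), whereas you take $E=\left(\bigoplus_{n=1}^\infty \ell_\infty^n\right)_{\ell_2}$, which is equally valid (reflexive as an $\ell_2$-sum of finite-dimensional spaces, and without finite cotype since it contains the $\ell_\infty^n$ uniformly).
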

\begin{proof} Let $2 \leq n \in \mathbb{N}$ and $n < p < \infty$. The Banach space $\mathcal{L}(^n\ell_p)$ of all continuous $n$-linear forms on $\ell_p$ is reflexive by \cite[Proposition 4.1]{alencar}, so it contains no copy of $c_0$. Thus, ${\rm Rad}(\mathcal{L}(^n\ell_p)) = {\rm RAD}(\mathcal{L}(^n\ell_p))$ by \cite[Theorem 6.1]{vakhania}. Moreover, $\mathcal{L}(^n\ell_p)$ has no finite cotype (see \cite[Proposition 1.4]{botelho-1}), so the correspondence $u \in  \Pi_{\ell_2^w;{\rm Rad}}\left(\ell_{2},\mathcal{L}(^n\ell_p)\right) \mapsto  (u(e_j))_{j=1}^\infty \in {\rm Rad}\left(\mathcal{L}(^n\ell_p)\right)$ is not surjective by the proof of Corollary \ref{C4S1C1}. Using that ${\rm Rad}(\mathbb{K})\stackrel{1}{=}{\rm RAD}(\mathbb{K})\stackrel{1}{=}\ell_2$,  
the class RAD is not ideal-representable by Lemma \ref{C4S1L1.1}.
\end{proof}

In  strong contrast to the case $p = 1$ (cf. Proposition \ref{i9vq}), we have the following: 

\begin{proposition} For $1 < p < \infty$, the class $\ell_p(\cdot)$ is not ideal-representable.
\end{proposition}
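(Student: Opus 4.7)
Suppose, for contradiction, that $\ell_p(\cdot)$ is $(\mathcal{I},\lambda)$-representable for some Banach operator ideal $\mathcal{I}$ and scalar sequence space $\lambda$. Proposition \ref{p9me} forces $\ell_p=\ell_p(\mathbb{K})\stackrel{1}{=}\lambda_*\stackrel{1}{=}\lambda^*$, and combining this with the reflexivity of $\ell_p$ (since $1<p<\infty$) and the fact that $(e_j)_{j=1}^\infty$ is a normalized Schauder basis of $\lambda$ pins down $\lambda\stackrel{1}{=}\ell_{p^*}$. By Theorem \ref{C4S1T1}(iv), the only candidate representing ideal is then $\Pi_{\ell_p^w;\ell_p(\cdot)}$, which is exactly the classical ideal $\Pi_p$ of absolutely $p$-summing operators. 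Hence the natural correspondence
$$\Psi\colon \Pi_p(\ell_{p^*},E)\longrightarrow \ell_p(E),\qquad \Psi(u)=(u(e_j))_{j=1}^\infty,$$
would have to be a surjective isometry for every Banach space $E$.

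The plan is to violate surjectivity. Given $(x_j)_{j=1}^\infty\in\ell_p(E)$, the map $u\colon \ell_{p^*}\to E$ defined by $u(\alpha)=\sum_j\alpha_j x_j$ is automatically bounded (with $\|u\|\le\|(x_j)\|_{\ell_p(E)}$ by H\"older's inequality) and it is the only operator in $\mathcal{L}(\ell_{p^*},E)$ satisfying $\Psi(u)=(x_j)$, since $(e_j)_{j=1}^\infty$ is a basis of $\ell_{p^*}$. So it suffices to exhibit a Banach space $E$ and a sequence $(x_j)\in\ell_p(E)$ for which this $u$ is not absolutely $p$-summing; equivalently, by characterization (ii) of Theorem \ref{C4S1T1}, to produce $u\in\mathcal{L}(\ell_{p^*},E)$ and a bounded $v\in\mathcal{L}(\ell_{p^*},\ell_{p^*})$ with $(u(e_j))_j\in\ell_p(E)$ and $(u\circ v(e_j))_j\notin\ell_p(E)$.

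The principal difficulty --- and the reason for the contrast with the case $p=1$ treated in Proposition \ref{i9vq} --- is the construction of an explicit such witness. For $p=1$ the crude estimate $\sum_n\|e_n^*\|_{c_0^*}\cdot\|x_n\|_E=\sum_n\|x_n\|_E<\infty$ already makes the associated operator nuclear and hence $1$-summing, whereas for $p>1$ the condition $(x_j)\in\ell_p(E)$ only controls $\sum_j\|x_j\|^p$ and is too weak to force $p$-summability of $u$. A natural strategy is to pick a non-Hilbertian target $E$ (for instance $E=\ell_q$ with $q\neq p^*$) and design a ``spreading'' bounded operator $v$ on $\ell_{p^*}$ which concentrates the action of $u$ onto coordinates where the $x_j$ are comparatively large in $E$, driving $\sum_j\|u(v(e_j))\|_E^p=\infty$ while $\sum_j\|u(e_j)\|_E^p$ remains finite; arranging that $v$ is genuinely bounded on $\ell_{p^*}$ while achieving this inflation is the delicate analytic step of the argument.
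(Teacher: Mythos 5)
Your reduction is correct and coincides with the paper's: Proposition \ref{p9me} forces $\lambda_*\stackrel{1}{=}\ell_p$ (hence $\lambda=\ell_{p^*}$), Theorem \ref{C4S1T1}(iv) forces the representing ideal to be $\Pi_{\ell_p^w;\ell_p(\cdot)}=\Pi_p$, and it remains to break the surjectivity of $\Psi\colon\Pi_p(\ell_{p^*},E)\to\ell_p(E)$ for some $E$, i.e., to produce $u\in\mathcal{L}(\ell_{p^*},E)$ with $(u(e_j))_{j=1}^\infty\in\ell_p(E)$ but $u\notin\Pi_p(\ell_{p^*},E)$. The problem is that you never produce such a $u$: your last paragraph explicitly defers ``the delicate analytic step'' of constructing the witness, and that step is the entire content of the proposition. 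Everything before it is formal bookkeeping that would apply equally to $p=1$, where the statement is false; so without the witness the argument proves nothing.

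The paper closes this gap not by an explicit construction but by citing Takahasi--Okazaki \cite{takahashi+okazaki}: the equivalence $u\in\Pi_p(\ell_{p^*},E)\Longleftrightarrow\sum_j\|u(e_j)\|^p<\infty$ holds for all $u\in\mathcal{L}(\ell_{p^*},E)$ exactly when $E$ is (isomorphic to) a quotient of a subspace of an $L_p$-space. Since such spaces are reflexive for $1<p<\infty$, taking $E=c_0$ yields the required $u$ (the forward implication always holds because $(e_j)_{j=1}^\infty\in\ell_p^w(\ell_{p^*})$, so it is the backward one that fails). Note also that your suggested target $E=\ell_q$ with $q\neq p^*$ is not safe in general: for instance $\ell_p$ and $\ell_2$ embed into $L_p$, so for those choices the equivalence does hold and no witness exists. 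If you want to avoid citing the coincidence theorem, you must actually carry out a construction of a bounded, non-$p$-summing operator with $p$-summable column vector, which is a nontrivial piece of $L_p$-space theory; as written, your proof is incomplete at precisely the decisive point.
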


\begin{proof} 
%

 Let $E$ be a non reflexive Banach space, say $E = c_0$. As subspaces of reflexive spaces are reflexive and quotients of reflexive spaces are reflexive, $E$ is not isomorphic to a quotient of a subspace of some $L_p$-space. By \cite[Theorem 3.1]{takahashi+okazaki}, there is an operator $u \in {\cal L}(\ell_{p^*}, E)$ for which the following equivalence does not hold:
	\begin{equation*} u\in \Pi_p(\ell_{p^*},E) \Longleftrightarrow \sum_{j=1}^\infty \|u(e_j)\|^p <+\infty. \end{equation*}
Since $(e_j)_{j=1}^\infty \in \ell_p^w(\ell_{p^*})$, the implication  $(\Longrightarrow)$ above holds for any operator in ${\cal L}(\ell_{p^*}, E)$, so the implication $(\Longleftarrow)$ is the one that fails for $u$. Therefore,  $(u(e_j))_{j=1}^\infty \in \ell_p(E)$ and $u \notin \Pi_p(\ell_{p^*},E)$. Assume that $\ell_p(\cdot)$ is ideal-representable. As $\ell_p(\mathbb{K}) = \ell_p = \ell_{p^*}^*$ and $\ell_p(\cdot)$ is linearly stable, the ideal $\Pi_{\ell_p^w;\ell_p(\cdot)}= \Pi_p$ represents $\ell_p(\cdot)$ by Theorem \ref{C4S1T1}, hence the operator
	$\Psi \colon \Pi_p(\ell_{p^*},E) \longrightarrow \ell_p(E)$, $\Psi(v) =  (v(e_j))_{j=1}^\infty,$ is surjective. Since $(u(e_j))_{j=1}^\infty \in \ell_p(E)$, there is $v \in \Pi_p(\ell_{p^*},E)$ such that 
	$$(u(e_j))_{j=1}^\infty = \Psi(v) = (v(e_j))_{j=1}^\infty. $$
So, $u(e_j) = v(e_j)$ for every $j$, from which follows that $u = v  \in \Pi_p(\ell_{p^*},E)$ because $(e_j)_{j=1}^\infty$ is a Schauder basis for  $\ell_{p^*}$. This contradiction proves that $\ell_p(\cdot)$ is not ideal-representable.
\end{proof}

\begin{remark}\rm Let $1< p<\infty$. In the theorem above we saw that the representation $ \Pi_p(\ell_{p^*},\cdot)  \approx   \ell_p(\cdot) $ is not true. Nevertheless, using the results of \cite[Section 25.10]{df} it can be proved that $ \Pi_p(\ell_{p^*},E)  \approx   \ell_p(E) $ whenever $E$ is a quotient of a subspace of some $L_p$-space. The particular case 
	$ \Pi_p(\ell_{p^*},\ell_p)  \approx   \ell_p(\ell_p)  $ can be computed directly  (see also \cite[Remark (v) p.\,26]{sinha+karn}).
\end{remark}

After so many negative applications of Theorem \ref{C4S1T1}, it is time for some positive application. We write $^*\ell_p:= \ell_{p^*}$ for $1 < p < \infty$ and $^*\ell_1:= c_0$.

\begin{example} \label{C4S1E5}\rm Let us see that, for each $1\leq p<\infty$, the sequence class $\ell_p^{\text{mid}}$ is ideal-representable. We shall do so by proving that $\ell_p^{\text{mid}}$ satisfies condition (iii) of Theorem \ref{C4S1T1}. Let $E$ be a Banach space, $(x_j)_{j=1}^\infty \in \ell_p^{\text{mid}}(E)$ and  $((\alpha_{j,i})_{i=1}^\infty)_{j=1}^\infty \in \ell_p^w(^*\ell_{p})$. 
For any $(\varphi_n)_{n=1}^\infty \in \ell_p^w(E^*)$,
	\begin{align*}
		\left(  \sum_{n=1}^\infty \sum_{j=1}^\infty \left|\varphi_n\left( \sum_{i=1}^\infty \alpha_{j,i} x_i\right)\right|^p  \right)^{1/p}&= \left(  \sum_{n=1}^\infty \sum_{j=1}^\infty \left| \sum_{i=1}^\infty \alpha_{j,i} \varphi_n(x_i)\right|^p  \right)^{1/p}\\
		&= \left(  \sum_{n=1}^\infty \sum_{j=1}^\infty \left| \langle (\varphi_n(x_i))_{i=1}^\infty,(\alpha_{j,i})_{i=1}^\infty  \rangle \right|^p  \right)^{1/p}\\
		&\leq \left(  \sum_{n=1}^\infty \|(\varphi_n(x_i))_{i=1}^\infty\|_p^p \cdot \|((\alpha_{j,i})_{i=1}^\infty)_{j=1}^\infty\|_{w,p}^p  \right)^{1/p}\\
		&= \|((\alpha_{j,i})_{i=1}^\infty)_{j=1}^\infty\|_{w,p} \cdot \left(  \sum_{n=1}^\infty \|(\varphi_n(x_i))_{i=1}^\infty\|_p^p   \right)^{1/p}\\
		&= \|((\alpha_{j,i})_{i=1}^\infty)_{j=1}^\infty\|_{w,p}  \left(  \sum_{n=1}^\infty \sum_{i=1}^\infty |\varphi_n(x_i)|^p   \right)^{1/p}\\
		&\leq \|((\alpha_{j,i})_{i=1}^\infty)_{j=1}^\infty\|_{w,p} \cdot \|(\varphi_n)_{n=1}^\infty\|_{w,p}\cdot \|(x_j)_{j=1}^\infty\|_{\text{mid},p},
	\end{align*}
where the last inequality follows by the definition of the norm $\|\cdot\|_{{\rm mid},p}$. This proves that $\left( \sum\limits_{i=1}^\infty \alpha_{j,i} x_i\right)_{j=1}^\infty  \in \ell_p^{\text{mid}}(E)$. Taking the supremum over $(\varphi_n)_{n=1}^\infty \in B_{\ell_p^w(E^*)}$, we get
	$$\left\|  \left( \sum_{i=1}^\infty \alpha_{j,i} x_i\right)_{j=1}^\infty  \right\|_{\text{mid},p}\leq  \|((\alpha_{j,i})_{i=1}^\infty)_{j=1}^\infty\|_{w,p} \cdot  \|(x_j)_{j=1}^\infty\|_{\text{mid},p}.$$
By Theorem \ref{C4S1T1}(iii), the class $\ell_p^{\text{mid}}$ is ideal-representable, and by (iv) it is represented by the ideal $\Pi_{\ell_p^w, \ell_p^{\rm mid}}$, which was studied in \cite{botelho+campos+santos, jamiljoed} under the name of ideal of weakly mid-$p$-summing operators.
\end{example}

\section{Further applications}
In this section we give a few more applications of our study on ideal-representable sequence classes, namely, we generalize results from \cite{junek+matos} and \cite{ariellama}.

The first application concerns a generalization of a result due to Junek and Matos \cite{junek+matos}. Let $1 \leq p < \infty$. On the one hand, the operators belonging to the ideal $\Pi_{\ell_p^w;\ell_p^u}$ were studied in \cite{gonzagut, pelc} and in \cite{junek+matos} they are called {\it unconditionally $p$-summing operators}. They denote  this ideal by ${\cal U}^p$. On the other hand, the operators belonging to the ideal $\Pi_{\ell_p^w;c_0(\cdot)}$ have been studied since the early 90's \cite{castillo90, castillo93}, for more recent developments, see \cite{ardakani, chendominguez}. The usual notation for this ideal is ${\cal C}^p$. In \cite[Theorem 1.7]{junek+matos}, it is proved that these two ideals coincide, that is,
\begin{equation}\label{9kme}\Pi_{\ell_p^w;\ell_p^u}= \mathcal{U}^p= \mathcal{C}^p= \Pi_{\ell_p^w;c_0(\cdot)}. \end{equation}

Next we generalize this result.

\begin{theorem}\label{C4S1P8}  Let $1 \leq p < \infty$. If $X$ is an ideal-representable sequence class with $X(\mathbb{K})=\ell_p$, then 
$\Pi_{X;\ell_p^u}=\Pi_{X;c_0(\cdot)}$.
\end{theorem}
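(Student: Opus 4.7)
The plan is to prove each inclusion separately; the nontrivial direction will reduce, via ideal-representability, to the Junek--Matos identity \eqref{9kme}. The inclusion $\Pi_{X;\ell_p^u}\subseteq \Pi_{X;c_0(\cdot)}$ is immediate from the embedding $\ell_p^u(F)\stackrel{1}{\hookrightarrow}c_0(F)$: for any $(y_j)\in\ell_p^u(F)$ and any $n$, one has $\|y_n\|=\sup_{\varphi\in B_{F^*}}|\varphi(y_n)|\leq\|(y_j)_{j=n}^\infty\|_{w,p}\to 0$, so $(y_j)\in c_0(F)$ with $\|(y_j)\|_\infty\leq\|(y_j)\|_{w,p}$. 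This also yields $\|u\|_{X;c_0(\cdot)}\leq\|u\|_{X;\ell_p^u}$.

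For $\Pi_{X;c_0(\cdot)}\subseteq \Pi_{X;\ell_p^u}$, fix $u\in\Pi_{X;c_0(\cdot)}(E,F)$ and $(x_j)_{j=1}^\infty\in X(E)$; the aim is to show $(u(x_j))_{j=1}^\infty\in\ell_p^u(F)$. By Proposition \ref{p9me}, there is a scalar sequence space $\lambda$ with $X(\mathbb{K})\stackrel{1}{=}\lambda^*$, and since $X(\mathbb{K})=\ell_p$ we may take $\lambda={^*\ell_p}$, so that $\lambda_*=\ell_p$ and $\lambda_*^w=\ell_p^w$. By Theorem \ref{C4S1T1}(iv), $X$ is represented by $\Pi_{\ell_p^w;X}$, so there exists $v\in\Pi_{\ell_p^w;X}({^*\ell_p},E)$ with $v(e_j)=x_j$ for every $j$ and $\|v\|_{\ell_p^w;X}=\|(x_j)\|_{X(E)}$. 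I would then verify that $u\circ v\in\Pi_{\ell_p^w;c_0(\cdot)}({^*\ell_p},F)$: any $(y_j)\in\ell_p^w({^*\ell_p})$ is carried by $v$ into $X(E)$ (because $v$ is $(\ell_p^w;X)$-summing) and then by $u$ into $c_0(F)$ (because $u$ is $(X;c_0(\cdot))$-summing). Now \eqref{9kme} upgrades this to $u\circ v\in\Pi_{\ell_p^w;\ell_p^u}({^*\ell_p},F)$. Since $(e_j)\in\ell_p^w({^*\ell_p})$ (each $\varphi\in({^*\ell_p})^*=\ell_p$ satisfies $(\varphi(e_j))=\varphi\in\ell_p$), evaluating at the unit vectors gives $(u(x_j))_{j=1}^\infty=((u\circ v)(e_j))_{j=1}^\infty\in\ell_p^u(F)$, which is what we wanted.

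The only substantive step is the reduction via ideal-representability to an operator $v$ defined on ${^*\ell_p}$; once we are there, the deep input is merely the Junek--Matos identity \eqref{9kme}, and the rest is routine verification. I expect no genuine obstacle, only some bookkeeping of norms: to promote the set equality to an isometric one, one combines the isometric content of \eqref{9kme} with the estimate $\|u\circ v\|_{\ell_p^w;c_0(\cdot)}\leq\|u\|_{X;c_0(\cdot)}\cdot\|v\|_{\ell_p^w;X}=\|u\|_{X;c_0(\cdot)}\cdot\|(x_j)\|_{X(E)}$, and takes the supremum over $(x_j)\in B_{X(E)}$ to obtain $\|u\|_{X;\ell_p^u}\leq\|u\|_{X;c_0(\cdot)}$.
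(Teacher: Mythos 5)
Your argument is correct and follows the same skeleton as the paper's proof: the easy inclusion via $\ell_p^u(F)\stackrel{1}{\hookrightarrow}c_0(F)$, then, for the converse, the reduction via Theorem \ref{C4S1T1}(iv) to an operator $v\in\Pi_{\ell_p^w;X}({}^*\ell_p,E)$ with $v(e_j)=x_j$, the observation that $u\circ v$ is $p$-convergent, and the evaluation $(u(x_j))_{j=1}^\infty=((u\circ v)(e_j))_{j=1}^\infty$. The one genuine difference is how you justify the coincidence $\Pi_{\ell_p^w;c_0(\cdot)}({}^*\ell_p,F)=\Pi_{\ell_p^w;\ell_p^u}({}^*\ell_p,F)$: you simply invoke the Junek--Matos identity \eqref{9kme}, whereas the paper deliberately refuses to do so, on the grounds that \eqref{9kme} is precisely the special case $X=\ell_p^w$ of the theorem being proved. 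Instead, the paper derives this coincidence independently from the Castillo--S\'anchez characterization of $p$-convergent operators (namely, $u$ is $p$-convergent if and only if $u\circ v$ is compact for every $v\in\mathcal{L}({}^*\ell_p,E)$, applied with $v={\rm id}_{{}^*\ell_p}$) together with the representation $\mathcal{K}({}^*\ell_p,\cdot)\stackrel{1}{=}\Pi_{\ell_p^w;\ell_p^u}({}^*\ell_p,\cdot)$ from Example \ref{C4S1E3}. Your version is logically valid as a proof of the general statement, since \eqref{9kme} is an established theorem of Junek and Matos; what the paper's route buys is that the theorem then genuinely subsumes \eqref{9kme} rather than presupposing it. Your closing remarks on norms are fine but not required, as the statement asserts only equality of the components, not an isometry; if you do want the norm estimate, note that the inequality $\|u\|_{X;\ell_p^u}\leq\|u\|_{X;c_0(\cdot)}$ needs the isometric form of the coincidence on ${}^*\ell_p$, which \eqref{9kme} as quoted gives only as an equality of sets, so that step would require a separate justification.
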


\begin{proof} Let $E$ and $F$ be arbitrary Banach spaces. Our argument depends on the equality
\begin{equation}\label{C4S1Eq1}    		\Pi_{\ell_p^w;\ell_p^u}(^*\ell_{p},F)=\Pi_{\ell_p^w;c_0(\cdot)}(^*\ell_{p},F).
\end{equation}
Of course, this equality follows from (\ref{9kme}), but this is exactly the result we are generalizing. So, we will obtain (\ref{C4S1Eq1}) without using (\ref{9kme}). We shall do that by applying the following result that can be found in 
\cite{castillo93}: The following are equivalente for a bounded operator 
$u\colon E \longrightarrow F$:\\
(i) $u$ is $p$-convergent, that is,  $u\in \Pi_{\ell_p^w;c_0(\cdot)}(E,F)$.\\
(ii) For every $v \in \mathcal{L}(^*\ell_p,E)$, $u\circ v$ is a compact operator.  

Returning to the proof of (\ref{C4S1Eq1}), it is trivial that $\Pi_{\ell_p^w;\ell_p^u}(^*\ell_{p},F)\subseteq \Pi_{\ell_p^w;c_0(\cdot)}(^*\ell_{p},F)$ because $\ell_p^u(F) \subseteq c_0(F)$. Conversely, given $u\in \Pi_{\ell_p^w;c_0(\cdot)}(^*\ell_{p},F)$, consider the identity operator ${\rm id}_{^*\ell_p}$. 
The implication (i)$\Rightarrow$(ii) above gives $u=u\circ {\rm id} \in \mathcal{K}(^*\ell_{p},F)$. In Example \ref{C4S1E3} we saw that the class  $\ell_p^u$ is $(\mathcal{K},^*\ell_{p})$-representable, hence   $u \in \mathcal{K}(^*\ell_{p},F)= \Pi_{\ell_p^w;\ell_p^u}(^*\ell_{p},F)$. The equality (\ref{C4S1Eq1}) is proved. 

As to the desired equality,
the inclusion $\Pi_{X;\ell_p^u}\subseteq \Pi_{X;c_0(\cdot)}$ is trivial because $\ell_p^u(\cdot)\subseteq c_0(\cdot)$. For the reverse inclusion, let $u\in \Pi_{X;c_0(\cdot)}(E,F)$ and $(x_j)_{j=1}^\infty \in X(E)$ be given. Since the class $X$ is ideal-representable by assumption, Theorem \ref{C4S1T1} gives that $X$ is  $\Pi_{\ell_p^w;X}$-representable, hence we can take $v\in \Pi_{\ell_p^w;X}(^*\ell_{p},E)$ such that $(v(e_j))_{j=1}^\infty=(x_j)_{j=1}^\infty$. Using the ideal property of $\Pi_{\ell_p^w;X}$ and (\ref{C4S1Eq1}), we get
$$u\circ v \in \Pi_{\ell_p^w;c_0(\cdot)}(^*\ell_{p},F)\stackrel{(\ref{C4S1Eq1})}{=}\Pi_{\ell_p^w;\ell_p^u}(^*\ell_{p},F)\approx \ell_p^u(F),$$
from which it follows that $(u(x_j))_{j=1}^\infty=(u\circ v(e_j))_{j=1}^\infty\in \ell_p^u(F)$. This proves that $u\in \Pi_{X;\ell_p^u}(E,F)$.   	
\end{proof}

The next example shows that, besides of recovering the known coincidence (\ref{9kme}), the theorem above yields new related coincidences.

\begin{example}\label{C4S1E9}\rm
 Let $1 \leq p<\infty$. In Example \ref{C4S1E5} we saw, applying Theorem \ref{C4S1T1}, that the sequence  class $\ell_p^{\text{mid}}$ is ideal-representable. From Theorem \ref{C4S1P8} it follows that $$  \Pi_{\ell_p^{\text{mid}};\ell_p^u}= \Pi_{\ell_p^{\text{mid}};c_0(\cdot)}.$$   	
\end{example}

Proceeding to the second application, let $1 \leq p<\infty$. Bearing in mind the chain
$$\ell_p\langle\cdot \rangle \subseteq \ell_p(\cdot) \subseteq \ell_p^u \cap \ell_p^{\rm mid} \subseteq \ell_p^u \cup \ell_p^{\rm mid} \subseteq \ell_p^w, $$
it is natural to wonder if every linearly stable sequence class  $X$ such that $X(\mathbb{K}) = \ell_p$ lies between $\ell_p\langle\cdot \rangle$ and $\ell_p^w$. In \cite[Proposition 6.3]{ariellama} we gave a partial solution for $p > 1$ and $X$ enjoying certain properties. Now, with the theory of ideal-representable sequence classes, we are in the position to give a complete positive solution to the problem.

\begin{proposition}\label{C4S2P3}
 Let $1\leq p <\infty$. If $X$ is a linearly stable sequence class with  $X(\mathbb{K})=\ell_p$, then $\ell_p\langle\cdot\rangle \stackrel{1}{\hookrightarrow} X \stackrel{1}{\hookrightarrow} \ell_p^w$.
\end{proposition}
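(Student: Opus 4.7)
The plan is to handle the two inclusions separately, each reducing to a representation fact already available in the excerpt.

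For the right inclusion $X \stackrel{1}{\hookrightarrow} \ell_p^w$, the idea is to apply Proposition~\ref{C4S1P2}(v) to the scalar sequence space $\lambda = {^*\ell_p}$, that is, $\lambda = \ell_{p^*}$ when $p > 1$ and $\lambda = c_0$ when $p = 1$. In both cases Examples~\ref{exec} give $\lambda_* = \ell_p$, so $X(\mathbb{K}) \stackrel{1}{=} \lambda_*$, and a direct unraveling of the definition yields $\lambda_*^w \stackrel{1}{=} \ell_p^w$. Proposition~\ref{C4S1P2}(v) then produces the desired isometric embedding $X \stackrel{1}{\hookrightarrow} \lambda_*^w \stackrel{1}{=} \ell_p^w$.

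For the left inclusion $\ell_p\langle\cdot\rangle \stackrel{1}{\hookrightarrow} X$, I would fix a Banach space $E$ and a sequence $(x_j)_{j=1}^\infty \in \ell_p\langle E\rangle$. The representation $\mathcal{N}({^*\ell_p},\cdot) \approx \ell_p\langle\cdot\rangle$ recalled in the Introduction produces a nuclear operator $u\colon {^*\ell_p} \to E$ with $u(e_j) = x_j$ for every $j$ and $\|u\|_{\mathcal{N}} = \|(x_j)_{j=1}^\infty\|_{C,p}$. Any nuclear representation $u(\alpha) = \sum_n \phi_n(\alpha) y_n$ with $\phi_n \in ({^*\ell_p})^*$, $y_n \in E$ and $\sum_n \|\phi_n\|\cdot\|y_n\| < \infty$ satisfies $x_j = \sum_n \phi_n(e_j) y_n$ for every $j$, and for each $n$ the scalar sequence $(\phi_n(e_j))_{j=1}^\infty$ lies in $\ell_p = X(\mathbb{K})$ with norm precisely $\|\phi_n\|$. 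Applying the linear stability of $X$ to the one-dimensional operator $\mathbb{K} \to E$, $\alpha \mapsto \alpha y_n$, of norm $\|y_n\|$, gives
$$\|(\phi_n(e_j)\,y_n)_{j=1}^\infty\|_{X(E)} \leq \|y_n\|\cdot\|(\phi_n(e_j))_{j=1}^\infty\|_{X(\mathbb{K})} = \|y_n\|\cdot\|\phi_n\|,$$
so the series $\sum_n (\phi_n(e_j) y_n)_{j=1}^\infty$ converges absolutely in the Banach space $X(E)$. Comparing with coordinatewise convergence via the continuous inclusion $X(E) \stackrel{1}{\hookrightarrow} \ell_\infty(E)$, the sum must equal $(x_j)_{j=1}^\infty$; therefore $(x_j)_{j=1}^\infty \in X(E)$ with $\|(x_j)_{j=1}^\infty\|_{X(E)} \leq \sum_n \|y_n\|\cdot\|\phi_n\|$, and infimizing over all nuclear representations of $u$ gives $\|(x_j)_{j=1}^\infty\|_{X(E)} \leq \|u\|_{\mathcal{N}} = \|(x_j)_{j=1}^\infty\|_{C,p}$.

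The main (minor) obstacle I anticipate is the identification of the limit in $X(E)$ of $\sum_n (\phi_n(e_j) y_n)_{j=1}^\infty$ with $(x_j)_{j=1}^\infty$, since the former is a priori only known to be an element of $X(E)$. This is dispatched by the observation that convergence in $X(E)$ forces coordinatewise convergence through the continuous inclusion into $\ell_\infty(E)$, after which the pointwise identity $x_j = \sum_n \phi_n(e_j) y_n$, which holds in $E$ by construction of the nuclear representation, closes the matter.
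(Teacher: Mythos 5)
Your proof is correct. The right inclusion is handled exactly as in the paper, as a particular case of Proposition~\ref{C4S1P2}(v) with $\lambda={^*\ell_p}$. For the left inclusion you take a genuinely different, more hands-on route: the paper also starts from the isometric representation $\mathcal{N}({^*\ell_p},\cdot)\approx\ell_p\langle\cdot\rangle$, but then invokes two abstract facts --- that $\mathcal{N}$ is the smallest Banach operator ideal \cite[Theorem 6.7.2]{pietsch}, so $\mathcal{N}({^*\ell_p},E)\stackrel{1}{\hookrightarrow}\Pi_{\ell_p^w;X}({^*\ell_p},E)$, and that the evaluation $\Psi$ of Proposition~\ref{C4S1P3} maps $\Pi_{\ell_p^w;X}({^*\ell_p},E)$ into $X(E)$ with norm at most $1$ --- to conclude $\|(u(e_j))_j\|_{X(E)}\le\|u\|_{\Pi_{\ell_p^w;X}}\le\|u\|_{\mathcal N}$. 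You instead inline the content of the minimality of $\mathcal{N}$ at the level of sequences: decompose $u$ into rank-one pieces, use linear stability on the operators $\alpha\mapsto\alpha y_n$ together with $X(\mathbb{K})\stackrel{1}{=}\ell_p={(^*\ell_p)}^*$ to bound each summand, sum absolutely in the Banach space $X(E)$, and identify the limit through $X(E)\stackrel{1}{\hookrightarrow}\ell_\infty(E)$ before infimizing over nuclear representations. Both arguments are sound; the paper's is shorter because it leans on Proposition~\ref{C4S1P3} and the cited minimality theorem, while yours is more self-contained (it never mentions the ideal $\Pi_{\ell_p^w;X}$) and makes visible exactly which structural properties of a sequence class --- completeness, linear stability, and the embedding into $\ell_\infty(E)$ --- carry the estimate. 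The one point you flagged as a possible obstacle, identifying the $X(E)$-limit with $(x_j)_{j=1}^\infty$, is indeed dispatched correctly by passing to coordinatewise convergence.
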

\begin{proof} The second inclusion is a particular case of Proposition \ref{C4S1P2}(v). As to the first inclusion, from Example \ref{C4S1E3} and Proposition \ref{i9vq} (remember that $\ell_1(\cdot) \stackrel{1}{=} \ell_1 \langle \cdot \rangle$)  we know that the sequence class $\ell_p\langle\cdot\rangle$ is represented by the Banach ideal  $\mathcal{N}$ of nuclear operators, which, in its turn, is the smallest Banach operator ideal \cite[Theorem 6.7.2]{pietsch}. Thus, given a Banach $E$ and a sequence $(x_j)_{j=1}^\infty \in \ell_p\langle E\rangle$, there exists an operator $u \in \mathcal{N}(^*\ell_p,E) \stackrel{1}{\hookrightarrow} \Pi_{\ell_p^w;X}(^*\ell_p,E) $ so that $(x_j)_{j=1}^\infty=(u(e_j))_{j=1}^\infty$. By Proposition \ref{C4S1P3} it follows that $(x_j)_{j=1}^\infty \in X(E)$ because $u\in \Pi_{\ell_p^w;X}(^*\ell_p,E)$. For the norm inequality, using that the norm of the operator $\Psi$ from Proposition \ref{C4S1P3} is not greater than 1, we have
$$ \|(x_j)_{j=1}^\infty\|_{X(E)}=\|(u(e_j))_{j=1}^\infty\|_{X(E)} = \|\Psi(u)\|_{X(E)}\leq \|u\|_{\Pi_{\ell_p^w;X}} \leq \|u\|_{\mathcal{N}}=\|(x_j)_{j=1}^\infty\|_{\ell_p\langle E\rangle}.$$
\end{proof}

%
%
%

\bigskip

\noindent Geraldo Botelho~~~~~~~~~~~~~~~~~~~~~~~~~~~~~~~~~~~~~~Ariel S. Santiago\\
Faculdade de Matem\'atica~~~~~~~~~~~~~~~~~~~~~~~~~~Departamento de Matemática\\
Universidade Federal de Uberl\^andia~~~~~~~~~~~~~Universidade Federal de Minas Gerais\\
38.400-902 -- Uberl\^andia -- Brazil~~~~~~~~~~~~~~~~~31.270-901 -- Belo Horizonte -- Brazil\\
e-mail: botelho@ufu.br~~~~~~~~~~~~~~~~~~~~~~~~~\,~~~~~e-mail: arielsantossantiago@hotmail.com

%

%
%
%

\end{document}